\providecommand{\U}[1]{\protect\rule{.1in}{.1in}}
\newtheorem{cor}{Corollary}
\newtheorem{prop}{Proposition}
\newtheorem{lem}{Lemma}
\newtheorem{con}{Conjecture}
\theoremstyle{definition}
\newtheorem{rem}{Remark}
\newtheorem{exa}{Example}
\newcommand{\pedro}{}
 \newcommand{\frakp}{\mathbf{P}}
\newcommand{\ttx}{\theta_{2}}
\begin{document}

\title{\textbf{Nash Equilibria in the Showcase Showdown game with unlimited spins}}
\author{L. Bay\'on\\Departmento de Matem\'aticas, Universidad de Oviedo\\Avda. Calvo Sotelo, s/n. 33007 Oviedo (Spain)\\bayon@uniovi.es\\P. Fortuy Ayuso\\Departmento de Matem\'aticas, Universidad de Oviedo\\Avda. Calvo Sotelo, s/n. 33007 Oviedo (Spain)\\fortunypedro@uniovi.es\\J.M. Grau \\Departmento de Matem\'aticas, Universidad de Oviedo\\Avda. Calvo Sotelo, s/n. 33007 Oviedo (Spain)\\grau@uniovi.es\thanks{Corresponding author}\\A.M. Oller-Marc\'en\\Departamento de Matem\'aticas - IUMA, Universidad de Zaragoza\\Pedro Cerbuna, 12. 50006 Zaragoza (Spain)\\oller@unizar.es\\M.M. Ruiz\\Departmento de Matem\'aticas, Universidad de Oviedo\\Avda. Calvo Sotelo, s/n. 33007 Oviedo (Spain)\\mruiz@uniovi.es }
\maketitle

\begin{abstract}

The game of \emph{Showcase Showdown}
with unlimited spins is investigated as an $n$-players continuous game, and the
Nash Equilibrium strategies for the players are obtained. The sequential game
with information on the results of the previous players is studied, as well as
three variants: no information, possibility of draw, and different modalities
of winner payoff.

\end{abstract}

\bigskip

\bigskip

\renewcommand{\baselinestretch}{1}

  \noindent\textit{Keywords:}Nash Equilibrium, optimal stopping,
$n$-person game, threshold strategy

\section{Introduction}

  A well known example of applied probability in TV is the game
show: ``The Price is Right''. One of the games of the show is called ``The
Showcase Showdown.'' In it, each of three players spins a wheel in turns. The
wheel has 20 values, between 5c and \$1, in increments of $5$c. Each player
spins once, and then, after seeing the result, has two options: either spinning
again, or  stopping. There are a maximum of two spins and if the player
spins twice, both values are added. If a player exceeds a dollar, he is
immediately eliminated. Otherwise, the turn passes to the next player. The aim
of each player is to obtain the highest score less than or equal to $1$. Each
player knows the results of the previous ones. The natural question is: What is
the best strategy for playing Showcase Showdown?

  Coe and Butterworth \cite{Coe} define the optimal stopping time for player $i$
$(i=1,2,3)$ as the smallest value such that it is better for player $i$ to
stop after the first spin rather than spinning a second time.
Tenorio and Cason \cite{Tenorio2002} also analyzed this discrete game with three players,
assuming that the result of each spin is a discrete random variable, uniformly
distributed on the set $\{0.05,0.10,...,1.00\}$, and that the spins are
independent random events.

Kaynar \cite{Kaynar} considers a variant in which each player draws one
or two random numbers between $0$ and $1$, and where each player has no
information about the results and actions of the previous ones. The optimal
solution with two possible attempts and two and three players is also given
there.

 In several studies, Sakaguchi \cite{Sakaguchi2004two-player,
Sakaguchi2004equilibrium} covers the game for two players, each of whom can
play once or twice, and assuming each sampling follows a uniformly distributed
random variable in $[0,1]$. The aim of the version of ``Showcase Showdown'' he
calls (GSS) is to obtain the highest score among all of the players in the
game, from one or two chances of sampling. He also studies three different
versions of this game, depending on the scoring function: ``Keep-or-Exchange''
(GKE), ``Competing Average'' (GCA), and ``Risky Exchange'' (GRE).
In \cite{Sakaguchi2004two-player}, he solves GSS, while
in \cite{Sakaguchi2004equilibrium} he does so for GKE and GRE. The version GCA has
not been solved yet. Also, in \cite{Sakaguchi2005threeplayerRE, Sakaguchi2005threeplayerKE} the same author studies the games with two sampling possibilities
but \emph{three players}.

 Elsewhere, also Sakaguchi \cite{Sakaguchy2007}, solves the GKE and GRE versions
with two players and three different types of information: no information
sharing at all, that each player informs the other one of his results, and
that the first one informs the second one of his results but not the other way
around.

  Another generalization is analyzed by Swenson \cite{Swenson2015}, with the
same rules as Showcase Showdown with $n$ players, but where each spin follows a
continuous random variable uniformly distributed on $[0,1]$: the $n$-player
continuous game. He raises the question of the existence of optimal cutoff
values for the case in which more than two spins are allowed.

  Our aim is to study the generalization of Showcase Showdown to
any number of samplings (called \emph{spins}) and of players. This has been
scarcely studied. Mazalov and Ivashko \cite{Mazalov} analyze the GSS game with no information
(each player makes his decisions with no knowledge of the results of the
others). Using dynamic programming theory, they find the Nash equilibrium of
the $n$-player GSS with an infinite number of spins: a state when it is
unprofitable for all players to individually deviate from their strategies,
assuming that if the scores of all players are more than $1$, then the winner
is the one whose score is closest to $1$. Seregina, Ivashko and Mazalov \cite{Seregrina} study the same
no-information version but with $n$ spins, providing the optimal payoffs. The
optimal strategies of the players in the version with complete information are
studied and the optimal payoffs for the first player are computed.

This work covers the $n$-player GSS with an infinite number of spins, where the aim of
each player is to obtain the highest total score less than (or equal to) $1$, allowing for
the possibility of draw among players when all their scores are strictly greater than
$1$. We study the sequential version of the game, with information on the previous players
results, studying their optimal strategies and obtaining formulas which allow us to
compute the optimal payoffs not only of the first player but of all of them. We also state
and solve three variants of the no-information version with different payoff modes under
the condition if all the scores are above $1$, then the payoff is $0$. Finally, we study
the Nash equilibria as the expected gains in each variant.

\pedro{}
In summary, our main contributions are:
\begin{itemize}
\item[(A)] We provide formulas allowing the computation of the winning probability of each player in the sequential game, when all of them play maximizing their own winning probability (Proposition \ref{estrategia}).
\item[(B)] We describe and provide examples of how, in the sequential case, coalitions between players can reduce other player's winning probabilities (Section 5.1).
\item[(C)] We compute the formulas giving the Nash equilibria in different versions of the game; these formulas show that, despite the essential aim of the game being the same, the diverse payoffs can give rise to great changes in the equilibrium strategies (Propositions \ref{pro:equilibrium-game-2-1}, \ref{pro:equilibrium-game-2-2}, and \ref{pro:gameII-3}, one for each variant of the game we study).
\end{itemize}
\pedro{}

  The paper is organized as follows: In Section 2, we describe the
  versions of the $n$-person Showcase Showdown game with unlimited spins we cover. In section 3 we present a version \emph{ad hoc} of the one-stage look-ahead (OLA) stopping rule. In section 4 we compute the cumulative distribution function of the random variable given by the score of each player depending on his greed threshold. %In Section 5,  we prove that Showcase Showdown and Risky Exchange are equivalent games.
  The optimal strategies and expected payoffs of all players are given for the sequential game in Section 5. In Section 6, we study three variants of the no-information game, with different payoffs for the winner. Finally, in section 7, some prospects for the future are presented that we consider interesting.

\section{The Showcase Showdown game}

We shall consider several cases of the following version of the Showcase Showdown
game, with set of players $\{A_{i}\}_{i=1}^{n}$: each player $A_{i}$ starts the game with
a value $S(A_i)=0$. Then he successively retrieves a value from a uniformly distributed
random variable in $[0,1]$, and adds this value to $S(A_i)$ (this act will be called a
\emph{play} of the player); this retrieval is repeated until either he stops or
$S(A_i)>1$. If he stops with $S(A_{i})\leq 1$, then his score is $S(A_{i})$, otherwise
$S(A_{i})$ is irrevocably set to $S(A_i)=0$, and player $A_{i+1}$ starts his turn. The
winner is the player with the greatest score, and there is a draw if all the scores are
$0$.

The variants we shall consider are the following, in all of which the final payoff for the winner is $1$.

\begin{enumerate}
\item  \textsc{Game i} --- \emph{Sequential game}. Player $A_{i}$
knows the values $S(A_{j})$ for $j=1,\ldots, i-1$.

\item \textsc{Game ii} --- \emph{No-information game}. There is no information available
  about $S(A_{j})$ \pedro{}for any other player\pedro{}. There are three sub-variants: the first two
  depending on where the payment comes from (either an external agent or the rest of the
  players), and the third one, in which \pedro{}one pre-selected player, say $A_n$\pedro{}, is the winner in case of draw.

\begin{itemize}
\item \textsc{Game ii.1} --- \emph{Non-constant-sum}. The \pedro{} winner's payoff is provided\pedro{} by an external agent. If
  $S(A_{i})=0$ for all $i$, then the payoff is $0$ for all players. \pedro{}Thus, the sum of all payoffs may be $0$.\pedro{}

\item \textsc{Game ii.2} --- \emph{Zero-sum}. The payoff of the winner is collected from
  the other players, each providing $1/(n-1)$. As in the previous case, the payoff is $0$
  if $S(A_{i})=0$ for all $i$.

\item \textsc{Game ii.3} --- \emph{Non-symmetric and constant-sum}. There is a known  player $A_{j}$ with advantage: if $S(A_{i})=0$ for all $i$, then $A_{j}$ wins. We shall
  set $j=n$ as the index is irrelevant.
\end{itemize}
\end{enumerate}

We shall show how the reasonable strategies are all based on establishing a \emph{greed   threshold} for each player $A_i$: a value \pedro{}$\kappa_i\in(0,1)$\pedro{} such that $A_i$ continues
playing (i.e. retrieving a random number and adding it to $S(A_i)$) until $S(A_i)>\kappa_i$. This way, the no-information variants can be understood as continuous games in which each player $A_{i}$
computes his greed threshold $\kappa_i$, and where the final payoff is given by some
functions $\mathbb{P}_i(\kappa_1,\ldots, \kappa_n)$. These payoffs depend, obviously, on
the specific game. In the sequential version (\textsc{Game i}), we shall also see how the
optimal policy is of this type and depends on the score of the previous players,
and the number of players still to play.

\section{Optimal strategy. Threshold strategy.}

In this section we restrict ourselves to the Showcase Showdown game with a single player with payoff $h(x)$ if he stops with score $S=x$. The function $h(x)$ is defined in $[0,1]$
and assumed non-decreasing. This includes, for instance, the payoff $h(x)=x$ (ordinary single-player Showcase Showdown). This situation can also happen with several players in
the sequential game \textsc{ (Game I)} and where $h(x)$ is the probability that no later player gets a score greater than $x$. In no-information games, $h(x)$ will represent the expected payoff when stopping at $x$, assuming certain rival strategies.

Roughly speaking, one can say that the optimal policy in this case (single player)
consists in: given the value $S$, decide which of playing again or stopping has better
expected payoff, and act accordingly. Let $G (x)$ be the expected payoff resulting from
playing again and following the optimal policy (whatever this may be) from that point
on. When $S=x$, the expected payoff following that strategy is necessarily
$\max(h(x),G (x))$, so that the expected payoff $G (x)$ of continuing playing with
$S=x$ and following, later on, the optimal policy, must satisfy:

\begin{equation*}
G (x)= h(0)x +\int_0^{1-x}  \max(h(x+t),G (x+t)) dt,
\end{equation*}
where $h(0)$ is the payoff when $S>1$ (i.e. final score $0$); the probability of this
when $S=x$ is $x$. This gives the following integral equation for $G (x)$:
\begin{equation*}
G (x)= h(0) x + \int_x^1 \max(h(t),G (t))dt.
\end{equation*}
The following lemma proves that $G (x)$ exists and can be explicitly defined (so that it
is also unique) in terms of $h(x)$. This is an \emph{ad-hoc} version of the one-stage
look-ahead (OLA) stopping rule (\cite{Mazalov}), which compares the payoff if
stopping when $S=x$ with the expected payoff of making a single play more, and
stopping.

\pedro{}
\begin{lem}\label{lem:psi-max}
Let $h:[0,1]\rightarrow\mathbb{R} $ be a non-decreasing monotone function, and let $\tilde{h}(x)=h(0) x + \int_x^1h(t)dt$, for $x\in[0,1]$. Define
\begin{equation*}
  \label{eq:kappa}
  \kappa := \inf \left\{x \in[0,1]: h(x)\geq  \tilde{h}(x) \right\}.
\end{equation*}
Then the function
\begin{equation}
  \label{eq:Psi}
  G (x):=
  \begin{cases}
    (\tilde{h}(\kappa)-h(0)) e^{\kappa -x}+h(0) & \text{if $x<\kappa$},
    \\
        \tilde{h}(x) & \text{if $x\geq\kappa$ }.
  \end{cases}
\end{equation}
is a non-increasing monotone  $C[0,1]$ function, and the only one
satisfying the integral equation
\begin{equation}
  \label{eq:Psifunc}
  y (x)= h(0) x  + \int_x^1 \max(h(t),y (t))dt.
\end{equation}
\end{lem}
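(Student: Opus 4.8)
The plan is to verify directly that the stated piecewise formula solves \eqref{eq:Psifunc}, and then to obtain uniqueness from a backward Gronwall argument. Before anything else I would record two structural facts about $\tilde h$. Since $h$ is monotone it is bounded and continuous almost everywhere, so $\tilde h$ is Lipschitz with $\tilde h'(x)=h(0)-h(x)\le 0$ a.e.; hence $\tilde h$ is continuous and non-increasing. Moreover, because $h$ is non-decreasing and $\tilde h$ is non-increasing, the superlevel set $\{x:h(x)\ge\tilde h(x)\}$ is upward closed: if $h(x_0)\ge\tilde h(x_0)$, then for $x\ge x_0$ one has $h(x)\ge h(x_0)\ge\tilde h(x_0)\ge\tilde h(x)$. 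Since $h(1)\ge h(0)=\tilde h(1)$, the point $x=1$ always lies in this set, so $\kappa$ is well defined with $\kappa\le 1$, the set equals $[\kappa,1]$ or $(\kappa,1]$, and in particular $h(t)\ge\tilde h(t)$ for every $t>\kappa$ while $h(t)<\tilde h(t)$ for every $t<\kappa$.

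The key elementary inequality I would establish next is that $\tilde h(\kappa)-h(0)=\int_\kappa^1\bigl(h(t)-h(0)\bigr)\,dt\ge 0$, which makes the exponential branch of $G$ lie above $\tilde h(\kappa)$: for $t\le\kappa$ one has $e^{\kappa-t}\ge 1$, whence $G(t)\ge(\tilde h(\kappa)-h(0))+h(0)=\tilde h(\kappa)$. On the other hand, passing to the limit in $h(s)<\tilde h(s)$ as $s\uparrow\kappa$ and using the monotonicity of $h$ together with the continuity of $\tilde h$ gives $h(t)\le h(\kappa^-)\le\tilde h(\kappa)$ for every $t<\kappa$. Combining the two yields the crucial domination $G(t)\ge\tilde h(\kappa)\ge h(t)$ on $[0,\kappa)$, so that $\max(h(t),G(t))=G(t)$ there, whereas $\max(h(t),G(t))=h(t)$ on $(\kappa,1]$.

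With both regimes of the $\max$ pinned down, verifying \eqref{eq:Psifunc} becomes a routine computation. For $x\ge\kappa$ the right-hand side collapses to $h(0)x+\int_x^1 h(t)\,dt=\tilde h(x)=G(x)$. For $x<\kappa$ I would split the integral at $\kappa$, use $\int_\kappa^1 h=\tilde h(\kappa)-h(0)\kappa$ for the upper part and integrate the explicit exponential over $[x,\kappa]$ for the lower part; the linear and $h(0)\kappa$ terms cancel and one is left exactly with $(\tilde h(\kappa)-h(0))e^{\kappa-x}+h(0)=G(x)$. Equivalently, one checks that this branch is the unique solution of the linear ODE $G'=h(0)-G$ attaining the value $\tilde h(\kappa)$ at $\kappa$. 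Continuity at $\kappa$ is immediate since both branches equal $\tilde h(\kappa)$ there, and monotonicity follows because each branch is non-increasing ($\tilde h$ on the right, and a decaying exponential with non-negative coefficient on the left).

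Finally, for uniqueness I would first note that any solution of \eqref{eq:Psifunc} is automatically continuous, since its right-hand side is an antiderivative of a bounded measurable function, and then run Gronwall backwards. If $y_1,y_2$ both solve \eqref{eq:Psifunc} and $d=y_1-y_2$, the $1$-Lipschitz property $|\max(a,b)-\max(a,c)|\le|b-c|$ gives $|d(x)|\le\int_x^1|d(t)|\,dt$. Setting $M(x)=\int_x^1|d(t)|\,dt$ yields $(e^{x}M(x))'=e^{x}(M'(x)+M(x))\ge 0$ together with $M(1)=0$, forcing $M\equiv 0$ and hence $d\equiv 0$. I expect the only genuinely delicate point to be the domination step on $[0,\kappa)$: it is where the possible discontinuities of the merely monotone $h$ must be controlled, and where the one-sided limit inequality $h(\kappa^-)\le\tilde h(\kappa)$ does the real work.
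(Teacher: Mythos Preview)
Your proof is correct. The existence half follows the same plan as the paper---direct verification that the piecewise formula satisfies \eqref{eq:Psifunc} after identifying which of $h$ and $G$ dominates on each side of $\kappa$---and your treatment of the domination step on $[0,\kappa)$ via the one-sided limit $h(\kappa^-)\le\tilde h(\kappa)$ is in fact more explicit than the paper's ``$h$ non-decreasing, $G$ non-increasing, so $G\ge h$'' remark.

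The uniqueness argument is where you genuinely diverge. The paper applies Banach's fixed point theorem to the map $y\mapsto h(0)x+\int_x^1\max(h,y)$ on $C[\alpha,1]$ with the sup norm, obtaining contraction constant $1-\alpha$; since the map is only non-expansive on all of $[0,1]$, it then lets $\alpha\downarrow 0$ and invokes continuity of $G$ at $0$. Your backward Gronwall argument uses the same $1$-Lipschitz property of $\max$ but bypasses the restriction-to-$[\alpha,1]$ device entirely: from $|d(x)|\le\int_x^1|d|$ you get $(e^xM(x))'\ge 0$ with $M(1)=0$, forcing $M\equiv 0$ on the whole interval in one stroke. This is shorter and more elementary, and it also shows uniqueness among all bounded measurable solutions (which are then automatically continuous, as you note), whereas the paper works within $C[\alpha,1]$ from the outset. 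The trade-off is that the fixed-point formulation makes the iterative structure of the problem transparent, which can be useful if one wants to approximate $G$ numerically.
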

\begin{proof}
  Before proceeding, notice that $h(x)$ being non-decreasing implies that it is Riemann integrable in $[0,1]$, so that the definition of $\tilde{h}(x)$ and Equation \eqref{eq:Psifunc} make sense. By definition, $\tilde{h}(x)$ is a continuous function, so that $\tilde{h}(\kappa)$ is well-defined.

  Let us first  verify that $G (x)$ as defined in \eqref{eq:Psi} is a solution of \eqref{eq:Psifunc}. By construction, $G (x)$ is continuous, as both parts are continuous and they coincide at $x=\kappa$. Also by construction, it is non-increasing for $x<\kappa$. Now, if $x\geq \kappa$ and $\epsilon \geq 0$, then, by definition of $\kappa$ and $G (x)$, and because $h(x)$ in non-decreasing, we have:
  \begin{multline*}
    \label{eq:psi-decr}
    G (x+\epsilon) = \tilde{h}(x) = h(0)(x+\epsilon) + \int_{x+\epsilon}^1h(t)dt = h(0)x + h(0)\epsilon +\int_{x+\epsilon}^1h(t)dt \leq\\
    h(0)x + h(x)\epsilon + \int_{x+\epsilon}^1h(t)dt \leq h(0)x +
    \int_{x}^{x+\epsilon} h(t)dt + \int_{x+\epsilon}^1h(t)dt = \\
    h(0)x + \int_x^1 h(t)dt = \tilde{h}(x) = G (x),
  \end{multline*}
so that $G (x)$ is non-increasing for $x\geq \kappa$. Thus, $G(x)$ is non-increasing in the whole interval $[0,1]$.

By construction, and by definition of $\kappa$, $G (x)=\tilde{h}(x)\leq h(x)$ for $x> \kappa$. Also, as $h(x)$ is non-decreasing and $G (x)$ is non-increasing, we infer that $G (x)\geq h(x)$ for $x<\kappa$. Thus, $\kappa$ satisfies also:
\begin{equation*}
  \label{eq:kappa-psi-h}
  \kappa = \inf \left\{ x\in[0,1] : G (x) \leq h(x) \right\},
\end{equation*}
 which implies that for $x\in (\kappa,1]$, we have $\max(h(x),G(x))=h(x)$, which gives:
 \begin{equation*}
   \label{eq:Psi-cumple-eq-2}
   G (x)=h(0)x  +
   \int_x^1
   \max(h(t),G (t))dt\;
   \mathrm{for}\; x>\kappa,
 \end{equation*}
 that is: $G (x)$ satisfies \eqref{eq:Psifunc} for $x > \kappa$ and obviously too for $x=\kappa$.
 
 On the other hand, as $G(x)$ is non-increasing, we obtain $G(x)\geq h(x)$ for $x<\kappa$. This gives, for $x\in [0,\kappa)$:
 \begin{equation*}
   \label{eq:Psi-izda1}
   G(x) = h(0)x + \int_x^\kappa G(t)dt + \int_{\kappa}^1h(t)dt,
 \end{equation*}
 which is, by construction,
 \begin{equation*}
   \label{eq:Psi-total}
   G(x) = h(0)x + \int_x^1\max(h(t),G(t))dt,
 \end{equation*}
 as required. Thus, $y(x)=G(x)$ is a solution of \eqref{eq:Psifunc}.

  In order to show its uniqueness, we apply Banach's fixed point theorem. Let $\alpha\in(0,1]$ and consider the map:
  \begin{equation*}
    %\label{eq:theta}
    \begin{array}{rcl}
      C[\alpha,1] & \stackrel{\varphi}{\longrightarrow} & C[\alpha, 1]\\
      y(x) & \longmapsto & h(0)x + \int_x^{1}\max(h(t),y(t))dt
    \end{array}
  \end{equation*}
  and, in $C[\alpha,1]$ consider the supremum metric. Take $y_0,y_1\in C[\alpha, 1]$. We have
  \begin{equation*}
    \label{eq:supremum}
    \|y_0 - y_1\| = \max_{x\in [\alpha,1]}\left|h(0)x + \int_x^{1}
      \max(y_0(t),h(t))dt - h(0)x -
      \int_{x}^1\max(y_{1}(t),h(t))dt\right|,
  \end{equation*}
  so that
  \begin{equation*}
    \label{eq:sup2}
    \|y_0-y_1\| \leq \max_{x\in [\alpha,1]}
      \int_x^{1}|\max(y_0(t),h(t)) - \max(y_1(t),h(t))|dt.
  \end{equation*}
  Given three real numbers $a,b,c$, $|\max(a,b)-\max(a,c)|\leq \max(b-c)$: the only possible values of $|\max(a,b)-\max(a,c)|$ are: $0$, $|b-c|$, $|a-c|$ and $|a-b|$, but the last two can only happen if $a$ is between $b,c$, so that in any case $|\max(a,b)-\max(a,c)|\leq |b-c|$. As a consequence,
  \begin{equation*}
    \label{eq:sup3}
    \|y_0-y_1\| \leq \max_{x\in[\alpha,1]}\int_x^1|y_0(t)-y_1(t)|dt \leq
    (1-\alpha)\max_{t\in[x,1]}|y_0(t)-y_1(t)|\leq (1-\alpha)\|y_0-y_1\|.
  \end{equation*}
  As $\alpha>0$, we deduce that $\varphi$ is a contraction map, and has a single fixed point in $C[\alpha,1]$ for any $\alpha$. As $G $ is continuous in $C[0,1]$, its restriction to any $C[\alpha,1]$ is that unique solution. This gives the uniqueness of $G $, as $G (0)$ is determined by its continuity.
\end{proof}
\pedro{}
In the following result, ``optimal'' means ``the expected payoff is maximum''. Notice that both ``stopping'' and ``continuing playing'' might be optimal at the same time.

\begin{prop} \label{OLA} Consider the single-player Showcase Showdown game with payoff
  function $h(x)$ for score $x$. Assume $h:[0,1]\rightarrow\mathbb{R}$ is non-decreasing
  monotone and  let $\tilde{h}(x)=h(0) x + \int_x^1h(t)dt$, for $x\in[0,1]$. Then, there exists an optimal policy \pedro{}which\pedro{} is of threshold
  type. In other words, there is $\kappa\in [0,1]$ (optimal threshold) such that if at some point
  the total score is $S< \kappa$, then the optimal decision consists in continuing
  playing, whereas if $S> \kappa$ then the optimal decision is to stop. Finally, if $S=\kappa$, the optimal decision is to stop if and only if $h(\kappa)\geq \tilde{h}(\kappa) $. Furthermore,
  \begin{equation*}
    \kappa = \inf \left\{x \in[0,1]:
      h(\pedro{}x\pedro{})\geq \tilde{h}(x)\right\},
  \end{equation*}
  and the expected payoff following this policy (that is, the optimal expected payoff)
  is
  \begin{equation}\label{eq:16}
    E = (\tilde{h}(\kappa)-h(0 )) e^{\kappa} \pedro{}+h(0).\pedro{}
  \end{equation}

\end{prop}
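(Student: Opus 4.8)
The plan is to leverage Lemma~\ref{lem:psi-max} directly, since it already provides the optimal continuation-value function $G(x)$ and identifies the threshold $\kappa$. First I would argue that $G(x)$, as constructed in the lemma, genuinely represents the optimal expected payoff of \emph{continuing to play} from score $x$: the integral equation \eqref{eq:Psifunc} is precisely the Bellman (dynamic programming) recursion for this single-player stopping problem, where $h(0)x$ accounts for the event of busting (probability $x$) and the integral $\int_x^1\max(h(t),G(t))\,dt$ accounts for landing at a new score $t\in(x,1]$ and then acting optimally. Because the lemma guarantees $G$ is the unique solution, the optimal value of continuing is unambiguously $G(x)$, and the optimal value of the whole game when at score $x$ is $\max(h(x),G(x))$.

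Next I would establish the threshold structure. From the lemma we know $G(x)\ge h(x)$ for $x<\kappa$ and $G(x)\le h(x)$ for $x>\kappa$ (indeed the lemma shows $\kappa=\inf\{x:G(x)\le h(x)\}=\inf\{x:h(x)\ge\tilde h(x)\}$). Hence for $S<\kappa$ we have $G(S)\ge h(S)$, so continuing is at least as good as stopping (optimal to continue); for $S>\kappa$ we have $h(S)\ge G(S)$, so stopping is optimal. This is exactly the threshold-type policy claimed. The boundary case $S=\kappa$ requires comparing $h(\kappa)$ against $G(\kappa)=\tilde h(\kappa)$: stopping is optimal precisely when $h(\kappa)\ge\tilde h(\kappa)$, which is the stated criterion. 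The formula $\kappa=\inf\{x:h(x)\ge\tilde h(x)\}$ is inherited verbatim from the lemma.

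Finally, to compute the optimal expected payoff $E$, I would evaluate the value of the game at the \emph{start}, where $S=0$. Since $0\le\kappa$, the optimal decision at the start is to continue (assuming $\kappa>0$; the degenerate $\kappa=0$ case stops immediately and is handled separately), so $E=G(0)$. Plugging $x=0$ into the first branch of \eqref{eq:Psi} gives
\begin{equation*}
  E=G(0)=(\tilde h(\kappa)-h(0))e^{\kappa-0}+h(0)=(\tilde h(\kappa)-h(0))e^{\kappa}+h(0),
\end{equation*}
which is precisely \eqref{eq:16}.

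The bulk of the work is already discharged by the lemma, so the main conceptual obstacle is the first step: rigorously justifying that the integral equation \eqref{eq:Psifunc} is the correct Bellman equation, i.e.\ that the abstractly-defined $G(x)$ really equals the optimal continuation value of the dynamic stopping problem rather than merely being \emph{a} solution of a functional equation. One must verify that the supremum over all (possibly non-threshold, history-dependent, randomized) stopping strategies is attained by the value $G$, for instance by a verification-theorem argument: show $\max(h,G)$ dominates the payoff of any admissible strategy and is achieved by the threshold policy. I would also take care over measurability and the well-definedness of the expected payoff when $h$ is merely non-decreasing (hence only Riemann integrable, as the lemma notes), and over the edge cases $\kappa=0$ and $\kappa=1$ to ensure the formula \eqref{eq:16} holds in all cases.
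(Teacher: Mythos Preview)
Your proposal is correct and follows essentially the same approach as the paper: invoke Lemma~\ref{lem:psi-max} to identify $G(x)$ as the optimal continuation value, compare $h(x)$ with $G(x)$ in the three regimes $x<\kappa$, $x=\kappa$, $x>\kappa$, and read off $E=G(0)$ from \eqref{eq:Psi}. Your additional caution about the verification step (that $G$ really equals the supremum over all admissible stopping rules) is well-placed, as the paper treats this point heuristically in the discussion preceding the lemma rather than proving it formally.
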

\begin{proof}
The expected payoff of continuing playing with score $x$ is $G(x)$ as defined in \eqref{eq:Psi} in Lemma \ref{lem:psi-max}. We know that $G(x)>\tilde{h}(x)$ for $x\in [0,\kappa)$, and that $G(x)=\tilde{h}(x)$ for $x\in[\kappa, 1]$.

For any $x\in [0,1]$, a strategy which consists in stopping for $S=x$ is optimal if and only if the
expected payoff when stopping is greater than or equal to the expected payoff when continuing. Thus, stopping is optimal if and only if $h(x)\geq G(x)$.  If $x<\kappa$, then $h(x)<\tilde{h}(x)$, which implies that $h(x)<G(x)$ and the optimal strategy is to continue playing and not stopping. If $x\in (\kappa,1]$, then clearly stopping is optimal because $h(x)\geq \tilde{h}(x)=G(x)$. When $x=\kappa$, stopping is optimal  if and only if $h(\kappa)\geq \tilde{h}(\kappa)$ because  $G(\kappa)=\tilde{h}(\kappa)$.

The expected payoff following the strategy in the statement is, clearly, the expected payoff at the
start of the game: \pedro{}$G (0)=(\tilde{h}(\kappa)-h(0 )) e^{\kappa}+h(0)$\pedro{}, which is $E$.
\end{proof}
Obviously, if $h$ is continuous and $h(x)\neq h(0)$ for some $x\in(0,1)$, then the optimal  threshold $\kappa$   is the
unique root of the equation $h(x)=h(0) x +\int_x^1 h(t)dt$. Notice that, in this case, if at some stage the score is $\kappa$, then both stopping and continuing are optimal strategies.

\pedro{}We provide two simple examples showing how the previous results apply to different payoff functions, one continuous and the other discontinuous.\pedro{}

\begin{exa}
  Assume the payoff in a single-player Showcase Showdown game like above is $h(x)=x$. The
  optimal  threshold is $\kappa=\sqrt{2}-1\simeq 0.41421$, the only root of the equation
  $x=\int_x^1tdt$. The expected payoff is $\kappa e^{\kappa}\simeq 0.62678$.
\end{exa}

\begin{exa}
 Assume now the non-continuous payoff function
\begin{equation*}
h(x):=%
\begin{cases}
x, & \text{if $x<1/2$},\\
7 x, & \text{if }x\geq 1/2.
\end{cases}
\end{equation*}
The optimal threshold  is
\begin{equation*}
  \label{eq:optthre1}
  \kappa = \inf \left\{x \in[0,1]: h(\pedro{}x\pedro{})\geq \int_x^1 h(t)dt\right\} = \frac{1}{2}.
\end{equation*}
\pedro{}The expected payoff is (following the notation of Proposition \ref{OLA}, equation \eqref{eq:16}), $(\tilde{h}(\kappa)-h(0))e^{\kappa}+h(0) = 21e^{1/2}/8 \simeq 4.3278$.
\pedro{}
\end{exa}

\section{Cumulative distribution function of the score.}
We can compute the cumulative distribution function of the random variable
$\xi_{\tau}$ representing the score of a player who follows the threshold
strategy given by the threshold $\tau$.

\begin{lem}
  \label{lem:xi-kappa} Let $\{Z_{n}\}_{n\in\mathbb{N}}$ be a sequence of independent
  uniformly distributed random variables in $[0,1]$. Define
  \begin{equation*}
    \chi_{0}:=0\text{ and }\chi_{n}:=\chi_{n-1}+Z_{n},
  \end{equation*}
  and, for $\tau\in\lbrack0,1]$,
  \begin{equation*}
    \xi_{\tau}:=%
    \begin{cases}
      \chi_{n}, & \text{if $\chi_{n-1} < \tau\leq \chi_{n}\leq1$},\\
      0, & \text{if }\chi_{n-1}< \tau\text{ and }\chi_{n}>1.
    \end{cases}
  \end{equation*}
  Then the cumulative distribution function of $\xi_{\tau}$ is
  \begin{equation}\label{eq:21}
    F_{\tau}(x):= P(\xi_\tau \leq x)=
    \begin{cases}
      0, & \text{if $x<0$},\\
      1+e^{\tau}\,\left(  -1+\tau\right)  , & \text{if $0\leq x\leq \tau$},\\
      1+e^{\tau}\,\left(  -1+x\right)  , & \text{if $\tau<x\leq 1$},\\
      1, & \text{otherwise}.
    \end{cases}
  \end{equation}
\end{lem}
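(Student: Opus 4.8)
The plan is to identify the law of $\xi_\tau$ through its \emph{support structure} together with a \emph{first-passage decomposition}. First I would observe that, by construction, $\xi_\tau$ takes either the value $0$ (the ``bust'' case) or a value in the interval $(\tau,1]$, since the first partial sum exceeding $\tau$ is automatically $>\tau$ and is recorded only when it is $\le 1$. Consequently $F_\tau$ is identically $0$ for $x<0$, constant and equal to the atom $P(\xi_\tau=0)$ on $[0,\tau]$, and equal to $1$ for $x\ge 1$. The whole statement thus reduces to: (i) computing the density of the absolutely continuous part of $\xi_\tau$ on $(\tau,1]$, and (ii) evaluating the atom at $0$.

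For step (i) I would set $N:=\min\{n\ge 1:\chi_n>\tau\}$, so that $\xi_\tau=\chi_N$ whenever $\chi_N\le 1$, and decompose the density at a point $x\in(\tau,1]$ according to the value of $N$. Since the increments are i.i.d.\ uniform, the Irwin--Hall formula gives that the density of the partial sum $\chi_{m}$ ($m\ge 1$) on $[0,1]$ equals $s^{m-1}/(m-1)!$, while $\chi_0=0$ contributes a boundary (atom) term. For $N=1$ the event $\{\xi_\tau=x\}$ is just $\{Z_1=x\}$ (with $\chi_0=0\le\tau$ automatically), contributing the uniform density $1$. For $N=n\ge 2$ I would use that the joint density of $(\chi_{n-1},\chi_n)$ at $(s,x)$ equals $f_{\chi_{n-1}}(s)=s^{n-2}/(n-2)!$ (Jacobian $1$ from $\chi_n=\chi_{n-1}+Z_n$), integrate $s$ over $[0,\tau]$ (the constraint $x-s\in(0,1)$ being automatic for $x\in(\tau,1]$), and obtain the contribution $\tau^{n-1}/(n-1)!$. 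Summing over $n$ yields
\begin{equation*}
f_{\xi_\tau}(x)=1+\sum_{n\ge 2}\frac{\tau^{n-1}}{(n-1)!}=\sum_{m\ge 0}\frac{\tau^{m}}{m!}=e^{\tau},
\end{equation*}
constant on $(\tau,1]$.

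For step (ii), rather than summing another series, I would invoke normalization: $\xi_\tau$ is a.s.\ finite (as $\chi_n\to\infty$, we have $N<\infty$ a.s.) and its total mass lies in $\{0\}\cup(\tau,1]$, so the atom is
\begin{equation*}
P(\xi_\tau=0)=1-\int_\tau^1 e^{\tau}\,dx=1-e^{\tau}(1-\tau)=1+e^{\tau}(\tau-1).
\end{equation*}
Assembling the pieces, for $0\le x\le\tau$ we obtain $F_\tau(x)=1+e^{\tau}(\tau-1)$, and for $\tau<x\le 1$ we obtain $F_\tau(x)=1+e^{\tau}(\tau-1)+\int_\tau^x e^{\tau}\,dt=1+e^{\tau}(x-1)$, exactly as in \eqref{eq:21}.

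The main obstacle is the bookkeeping in step (i): one must correctly isolate the boundary contribution of $\chi_0=0$ (the $N=1$ term), since it is precisely this term that promotes the naive series $\sum_{n\ge 2}\tau^{n-1}/(n-1)!=e^{\tau}-1$ to the correct constant density $e^{\tau}$. The remaining analytic points—the interchange of summation and integration (justified by Tonelli, all terms being nonnegative) and the Irwin--Hall density on $[0,1]$ (an easy induction over convolutions of the uniform density)—are routine.
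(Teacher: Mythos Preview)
Your argument is correct, and it follows a genuinely different route from the paper's. The paper exploits the Markov structure of the process: it fixes $x\in[\tau,1]$, sets $f(t)=P(\tau\le\xi_\tau\le x\mid \chi_j=t)$ for $t\le\tau$, and derives the renewal-type integral equation
\[
f(t)=(x-\tau)+\int_0^{\tau-t}f(t+s)\,ds,
\]
which differentiates to $f'=-f$ with boundary value $f(\tau)=x-\tau$, whence $f(0)=e^{\tau}(x-\tau)$. You instead perform a direct first-passage expansion over $N=\min\{n:\chi_n>\tau\}$, using the Irwin--Hall density $f_{\chi_{m}}(s)=s^{m-1}/(m-1)!$ on $[0,1]$ to sum the contributions $\tau^{n-1}/(n-1)!$ to the exponential series. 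Your approach is more combinatorial and has the pleasant feature that $e^{\tau}$ appears transparently as $\sum_{m\ge 0}\tau^m/m!$; it also yields the (constant) density on $(\tau,1]$ directly. The paper's approach, by contrast, avoids any knowledge of Irwin--Hall and any infinite series, trading them for a one-line ODE; it is perhaps the more robust of the two if one were to replace the uniform increments by another distribution, since the renewal equation generalizes immediately while your series would not close so neatly.
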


\begin{proof}
  The sequence of random variables above consists in following a threshold strategy with
  greed threshold $\tau$. Thus, we shall reason with a player following such a strategy.

  Assume the player has played $j$ times, for $j\in \mathbb{Z}_{\geq 0}$ and has score
  $S=t$. For $t\leq \tau$, let $f(t)$ be the probability that the final score is
  $S\in [\tau , x]$.
\begin{equation*}
  f(t):=P(\tau\leq\xi_{\tau}\leq x | \chi_{j}=t).
\end{equation*}
In order to compute $f(t)$, we need to consider the two possibilities leading to
$S\in [\tau, x]$ assuming that $\chi_j=t$:
\begin{itemize}
\item[i)] Either $Z_{j+1}\in [\tau-t,x-t]$ (i.e. the process ends after playing once
  more), which happens with probability $x-\tau$.
\item[ii)] More than just another play is required. The probability we are trying to
  compute is the expected value of the probability of ending in $[\tau,x]$ (that is,
  $f(t+s)$) starting with $\chi_j=t+s$ for $s=[0,\tau-t]$):
\begin{equation*}
  \int_{0}^{\tau-t}f(t+s)\,ds.
\end{equation*}
\end{itemize}
Thus,
\begin{equation*}
f(t) = (x-\tau) + \int_{0}^{\tau-t}f(t+s)\,ds,
\end{equation*}
which, as $f$ is necessarily continuous if it satisfies that equation, gives the
differential equation
\begin{equation*}
f^{\prime}(t)=-f(t),
\end{equation*}
with the condition $f(\tau)=x-\tau$, whose solution is $f(t)=e^{\tau-t}\,\left(  x-\tau\right)$. We obtain:
\begin{equation*}
P(\tau\leq\xi_{\tau}\leq x)=f(0)=e^{\tau}(x-\tau).
\end{equation*}
Therefore, the  cumulative distribution function of $\xi_{\tau}$ is given, for $x\in[0,\tau]$, by:
\begin{equation}\label{eq:27}
F_{\tau}(x)=P(0\leq \xi_{\tau}\leq \tau)=1-P(\tau\leq\xi_{\tau}\leq1)=1-e^{\tau}(1-\tau),
\end{equation}
while, for $x\in[\tau,1]$, we have:
\begin{align}\label{eq:28}
  F_{\tau}(x)&
             =P(0\leq\xi_{\tau}\leq x)=
             P(0\leq\xi_{\tau}\leq \tau)+
             P(\tau\leq\xi_{\tau}\leq x)
             =1+e^{\tau}\,\left(  -1+x\right),
\end{align}
and the rest of the statement follows.
\end{proof}

\begin{rem}
 \label{rem:P} Notice that $F_{\tau}(x)$ is the distribution of the product of two independent  random variables, one
of which is Bernoulli with parameter $e^{\tau }(1-\tau )$, and the other uniform on $[\tau ,1]$:
\begin{equation*}
\xi_{\tau }\sim    \verb"Be"(e^{\tau }(1-\tau )) \mathbf{U}
[\tau ,1].
\end{equation*}
The  \verb"success" event in the Bernoulli random variable $\verb"Be"(e^{\tau}\,\left(  1-\tau\right)  )$ means getting a payoff greater than $0$ using $\tau$ as greed threshold, which is the same as obtaining $S\in [\kappa,1]$ with uniform distribution for $S$ in that interval.

 Observe that if $\tau \in [0,1]$, then by \eqref{eq:27} and \eqref{eq:28}, we obtain $P(0\leq\xi_{\tau}\leq\tau)=P(\xi_{\tau}=0)$. In what follows, we shall use the
 notation
 \begin{equation}\label{eq:30}
   \frakp{}(x):=P(\xi_{x}=0)=1+e^{x}\,\left( -1+x\right).
 \end{equation}
 to represent the probability of a player with greed threshold $x$ to get score $S=0$.
\end{rem}
The following easy corollary will be useful.
\begin{cor}
  \label{cor:exp} Let $h$ be a Riemann integrable function on
$[0,1]$. Then,

\begin{itemize}
\item[i)] The expected value $\mathbb{E}[h(\xi_x)]$ satisfies:
  \begin{equation*}\displaystyle \mathbb{E}\left[  h(\xi_{x})\right] =
    \frakp{}(x)h(0)+e^{x}\int_{x}^{1}\pedro{h}\pedro{}(t) dt.
  \end{equation*}
\item[ii)] The conditional expected value
  $\mathbb{E}\left[ h(\xi_{x})|\xi_{x}>0\right]$ satisfies:
  \begin{equation}\label{eq:32}
    \displaystyle\mathbb{E}\left[ h(\xi_{x})|\xi_{x}>0\right]
    =\int_{x}^{1}\frac{h(t)}{1-x}dt, x\in [0,1).
  \end{equation}
\end{itemize}
\end{cor}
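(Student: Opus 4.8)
The plan is to prove both parts directly from Lemma \ref{lem:xi-kappa}, using the explicit description of the distribution of $\xi_x$ given there, together with the interpretation recorded in Remark \ref{rem:P}: namely that $\xi_x$ takes the value $0$ with probability $\frakp(x)=1+e^x(-1+x)$, and is otherwise distributed like a uniform random variable on $[x,1]$. The cleanest route is to use the mixture structure $\xi_x\sim \verb"Be"(e^x(1-x))\,\mathbf{U}[x,1]$: with probability $\frakp(x)$ the score is $0$, and with complementary probability $1-\frakp(x)=e^x(1-x)$ the score is uniform on $[x,1]$.

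For part (i), I would split the expectation according to whether $\xi_x=0$ or $\xi_x>0$. The first event contributes $\frakp(x)h(0)$. For the second, I use that, conditional on $\xi_x>0$, the variable is uniform on $[x,1]$, so its density is $\frac{1}{1-x}$ on that interval; hence the conditional expectation is $\int_x^1 \frac{h(t)}{1-x}\,dt$, and multiplying by the probability $e^x(1-x)$ of the event gives $e^x(1-x)\cdot\int_x^1\frac{h(t)}{1-x}\,dt = e^x\int_x^1 h(t)\,dt$, in which the factor $(1-x)$ cancels. Adding the two pieces yields exactly the claimed formula $\mathbb{E}[h(\xi_x)]=\frakp(x)h(0)+e^x\int_x^1 h(t)\,dt$. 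Alternatively, and perhaps more transparently for a referee, one can compute $\mathbb{E}[h(\xi_x)]$ as a Riemann--Stieltjes integral $\int_{\mathbb{R}} h\,dF_x$ against the cumulative distribution function in \eqref{eq:21}: the jump of $F_x$ at $0$ has size $\frakp(x)$ and contributes $\frakp(x)h(0)$, while on $[x,1]$ the function $F_x$ has derivative $e^x$, contributing $e^x\int_x^1 h(t)\,dt$.

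For part (ii), the conditional expectation given $\xi_x>0$ is immediate once the conditional law is identified: conditioning on the success event removes the atom at $0$ and leaves the uniform law on $[x,1]$ with density $\frac{1}{1-x}$, so $\mathbb{E}[h(\xi_x)\mid \xi_x>0]=\int_x^1\frac{h(t)}{1-x}\,dt$, valid for $x\in[0,1)$ (the restriction $x<1$ is exactly what is needed for the event $\{\xi_x>0\}$ to have positive probability and for $\frac{1}{1-x}$ to be defined). This part can also be obtained directly from part (i) by writing $\mathbb{E}[h(\xi_x)\mid\xi_x>0]=\frac{\mathbb{E}[h(\xi_x)]-\frakp(x)h(0)}{P(\xi_x>0)}=\frac{e^x\int_x^1 h(t)\,dt}{e^x(1-x)}$, which again simplifies to $\int_x^1\frac{h(t)}{1-x}\,dt$.

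There is really no hard step here; the only point requiring a little care is justifying the density computation on $[x,1]$ from the piecewise formula \eqref{eq:21}, i.e. checking that $F_x$ is absolutely continuous on $(x,1]$ with $F_x'(t)=e^x$ there, and that its only singular contribution is the single atom at $0$ of mass $\frakp(x)$ (note in particular that $F_x$ is constant on $[0,x]$, so there is no mass on the open interval $(0,x]$). Once the mixture decomposition of Remark \ref{rem:P} is invoked, both identities reduce to elementary integration, so I would present the argument succinctly by appealing to that remark rather than re-deriving the law of $\xi_x$.
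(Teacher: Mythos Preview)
Your proof is correct and follows essentially the same approach as the paper: both invoke the mixture decomposition $\xi_x\sim\texttt{Be}(e^x(1-x))\,\mathbf{U}[x,1]$ from Remark~\ref{rem:P}, split the expectation into the atom at $0$ and the uniform part, and simplify using $1-\frakp(x)=e^x(1-x)$. Your additional Riemann--Stieltjes justification and the alternative derivation of (ii) from (i) are sound but not needed for the paper's level of detail.
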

\begin{proof}
  Since $\xi_{x}\sim \verb"Be"( e^{x}\,\left( 1 - x\right) )\cdot\mathbf{U}[x,1]$, we have
that
  \begin{equation*}
  \mathbb{E}[h(\xi_x)] = \frakp{}(x) h(0)+ (1- \frakp{}(x)) \int_{x}^{1}
\frac{h(t)}{1-x}dt= \frakp{}(x) h(0)+ e^x \int_{x}^{1} h(t) dt,
  \end{equation*}
while
\begin{equation*}
\mathbb{E}[h(\xi_x)| \xi_x>0] = \mathbb{E}[h(\mathbf{U}[x,1])] = \int_{x}^{1} \frac{h(t)}{1-x} dt.
\end{equation*}
\end{proof}

\section{ Game I. Sequential game}
 In this game, there are $n$ players which play just once each,
sequentially. Each one knows the score obtained by the previous ones. The
winner receives a unitary payoff, and it is irrelevant whether this comes from an external payer or from the other players. Our aim is to find the optimal policy for each player, that is, the threshold strategy which maximizes his expected payoff ---or, what is the same, his probability of winning.

The Showcase Showdown game without draw is studied in \cite{Seregrina}:\emph{ if all the scores exceed 1, then the winner is the player with the lowest one}. In the sequential game, if all previous players had score 0, then the last player (assuming a rational behavior) wins simply stopping after the first play. Thus, the present section contains parts of section 3.2 of that reference, in which the optimal policy for each player is provided, as well as the probability of the first player winning (but just the first one). In this work, using some delicate arguments, we have been able to define a recursive procedure for computing each players' probability of winning, assuming all of them act optimally. We also include, for the case of three players, a study of the possible coalitions that can be made that increase the joint probability of winning for the coalesced.

\pedro{}
\begin{lem}
\label{lem:theta}
  For all $n>0$, the equation $ \frakp{}(x)^{n-1}=  \int_x^1 \frakp{}(t)^{n-1}dt$, i.e,
\begin{equation}\label{eq:35}
{\left(  1-e^{x}+e^{x}\,x\right)  }^{n-1}=\int_{x}^{1}{\left(  1-e^{t}%
+e^{t}\,t\right)  }^{n-1}\,dt, 
\end{equation}
has a single solution in  $[0,1]$, which will be denoted $\theta_{n}$
\end{lem}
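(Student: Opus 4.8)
The plan is to reduce the statement to a sign analysis of the single auxiliary function
\begin{equation*}
  g(x) := \frakp{}(x)^{n-1} - \int_x^1 \frakp{}(t)^{n-1}\,dt, \qquad x\in[0,1],
\end{equation*}
whose zeros are, by definition, exactly the solutions of \eqref{eq:35}. I would establish that $g$ is continuous and \emph{strictly} increasing on $[0,1]$ and that $g(0)\le 0<g(1)$; these three facts together give at once the existence (by the Intermediate Value Theorem) and the uniqueness (by strict monotonicity) of $\theta_n$.

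First I would record the elementary behaviour of $\frakp{}(x)=1+e^{x}(-1+x)=1-e^{x}(1-x)$. A direct computation gives $\frakp{}'(x)=xe^{x}\ge 0$, so $\frakp{}$ is non-decreasing on $[0,1]$ with $\frakp{}(0)=0$ and $\frakp{}(1)=1$; in particular $0\le\frakp{}(x)\le 1$ throughout, so the powers $\frakp{}(x)^{n-1}$ are well defined, the integrand in \eqref{eq:35} is continuous, and $g$ is continuous on $[0,1]$. The case $n=1$ is degenerate and I would dispose of it first: there \eqref{eq:35} reads $1=\int_x^1 1\,dt=1-x$, whose only root in $[0,1]$ is $\theta_1=0$. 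For $n\ge 2$ I would evaluate $g$ at the endpoints: $g(1)=\frakp{}(1)^{n-1}=1>0$, while $g(0)=\frakp{}(0)^{n-1}-\int_0^1\frakp{}(t)^{n-1}\,dt=-\int_0^1\frakp{}(t)^{n-1}\,dt<0$, since $\frakp{}(t)>0$ for $t\in(0,1]$.

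The uniqueness is where strict monotonicity enters. By the Fundamental Theorem of Calculus,
\begin{equation*}
  g'(x)=(n-1)\,\frakp{}(x)^{n-2}\,x e^{x}+\frakp{}(x)^{n-1}.
\end{equation*}
For every $x\in(0,1]$ we have $\frakp{}(x)>0$, so both summands are non-negative and the second is strictly positive; hence $g'(x)>0$ on $(0,1]$, so $g$ is strictly increasing on $(0,1]$ and, extending by continuity at the left endpoint, strictly increasing on all of $[0,1]$. Being continuous and strictly increasing with $g(0)\le 0<g(1)$, the function $g$ vanishes at exactly one point of $[0,1]$, which is $\theta_n$.

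I expect the only genuinely delicate point to be the bookkeeping at $x=0$ when $n\ge 2$, where $g'(0)=0$: this is why I would phrase strict monotonicity as ``$g'>0$ on the open end $(0,1]$, then extend by continuity'' rather than asserting $g'>0$ on the closed interval, and why the trivial $n=1$ case must be separated out at the start. Everything else is a routine sign computation. As a shortcut one may note that $g=h-\tilde h$ for the non-decreasing payoff $h(x)=\frakp{}(x)^{n-1}$ with $h(0)=0$, so for $n\ge 2$ the uniqueness is also an immediate instance of the remark following Proposition \ref{OLA}.
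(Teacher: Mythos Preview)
Your proof is correct and follows essentially the same approach as the paper: the paper separates the two sides into an increasing function $f_n(x)=\frakp{}(x)^{n-1}$ and a decreasing function $g_n(x)=\int_x^1\frakp{}(t)^{n-1}\,dt$, checks the endpoint values, and concludes a unique crossing, which is exactly your analysis of the difference $g=f_n-g_n$. Your treatment is slightly more explicit (computing $g'$ and handling the $n=1$ case and the point $x=0$ carefully), but the underlying argument is the same.
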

\pedro{}
  \begin{proof}
Put $f_n(x)=\left(  1-e^{x}+e^{x}\,x\right)^{n-1}$ and $g_n(x)=\int_{x}^{1}{\left(  1-e^{t}%
+e^{t}\,t\right)  }^{n-1}\,dt$. If $n=1$, then $f_1(x)=1$ and $g_1(x)=1-x$ so $x=0$ is the only solution. Now, for $n>1$, we have that $f_n(0)=0$, $f_n(1)=1$, $g_n(0)>0$ and $g_n(1)=0$. Since $f_n$ is increasing and $g_n$ is decreasing in $[0,1]$ for every $n>0$, the result follows.
\end{proof}
\pedro{}
% \begin{defi} \label{def:theta} 
%   For each $n$, the unique solution given by Lemma \ref{lem:theta} will be called $\theta_n$.
% \end{defi}
% \pedro{}
\begin{lem}
  \label{lem:theta-increasing} The sequence $\{\theta_{n}\}_{n>0}$
is strictly increasing.
\end{lem}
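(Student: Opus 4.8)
The plan is to reduce the claimed monotonicity to a single sign computation, exploiting the structure already set up in the proof of Lemma \ref{lem:theta}. For each $n\geq 1$ I would introduce
\begin{equation*}
\Phi_n(x) := \frakp{}(x)^{n-1} - \int_x^1 \frakp{}(t)^{n-1}\,dt,
\end{equation*}
so that, by Lemma \ref{lem:theta}, $\theta_n$ is precisely the unique zero of $\Phi_n$ on $[0,1]$. As recorded there, $\frakp{}(x)^{n-1}$ is non-decreasing and the tail integral is non-increasing, whence $\Phi_n$ is non-decreasing; consequently $\Phi_n(x)<0$ for $x<\theta_n$ and $\Phi_n(x)\geq 0$ for $x\geq\theta_n$. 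The key observation is that, because $\Phi_{n+1}$ is non-decreasing with unique root $\theta_{n+1}$, proving $\theta_n<\theta_{n+1}$ amounts to verifying the single strict inequality $\Phi_{n+1}(\theta_n)<0$: indeed, were $\theta_n\geq\theta_{n+1}$, monotonicity would force $\Phi_{n+1}(\theta_n)\geq\Phi_{n+1}(\theta_{n+1})=0$.

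The heart of the argument is to evaluate $\Phi_{n+1}$ at $\theta_n$ and play it off against the defining relation of $\theta_n$. Writing
\begin{equation*}
\Phi_{n+1}(\theta_n) = \frakp{}(\theta_n)^{n} - \int_{\theta_n}^1 \frakp{}(t)^{n}\,dt,
\end{equation*}
I would insert the identity $\frakp{}(\theta_n)^{n-1}=\int_{\theta_n}^1\frakp{}(t)^{n-1}\,dt$ satisfied by $\theta_n$. Since $\frakp{}'(x)=xe^x>0$ for $x>0$, the function $\frakp{}$ is strictly increasing on $(\theta_n,1]$, and $\theta_n<1$ (as $\Phi_n(1)=1>0$, the root lies strictly below $1$), so $\frakp{}(t)>\frakp{}(\theta_n)$ on a set of positive measure. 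Multiplying $\frakp{}(t)^{n-1}$ by this strictly larger factor yields
\begin{equation*}
\int_{\theta_n}^1 \frakp{}(t)^{n}\,dt = \int_{\theta_n}^1 \frakp{}(t)\,\frakp{}(t)^{n-1}\,dt > \frakp{}(\theta_n)\int_{\theta_n}^1 \frakp{}(t)^{n-1}\,dt = \frakp{}(\theta_n)^{n},
\end{equation*}
so that $\Phi_{n+1}(\theta_n)<0$, exactly as needed. The boundary case $n=1$ (where $\theta_1=0$ and $\frakp{}(\theta_1)=0$) fits the same computation, since $\int_0^1\frakp{}(t)\,dt>0$.

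The one subtlety to watch — the main obstacle — is the \emph{strictness} of the integral comparison: one must guarantee that $\frakp{}(t)$ strictly exceeds $\frakp{}(\theta_n)$ on a set of positive measure, which is exactly why I would isolate the facts $\theta_n<1$ and $\frakp{}'>0$ on $(0,1)$ before applying the inequality. Everything else is monotonicity bookkeeping already prepared by Lemma \ref{lem:theta}. Since $\Phi_{n+1}(\theta_n)<0$ gives $\theta_n<\theta_{n+1}$ for every $n\geq 1$, the sequence $\{\theta_n\}_{n>0}$ is strictly increasing, as claimed.
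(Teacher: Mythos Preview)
Your proof is correct and is essentially the same as the paper's. Both arguments reduce to showing $\frakp{}(\theta_n)^{n}<\int_{\theta_n}^{1}\frakp{}(t)^{n}\,dt$ by combining the defining identity $\frakp{}(\theta_n)^{n-1}=\int_{\theta_n}^{1}\frakp{}(t)^{n-1}\,dt$ with the strict pointwise inequality $\frakp{}(\theta_n)\frakp{}(t)^{n-1}<\frakp{}(t)^{n}$ for $t\in(\theta_n,1]$; you merely package this with the auxiliary function $\Phi_n$ and are a bit more explicit about the strictness (via $\theta_n<1$ and $\frakp{}'>0$) and the base case $n=1$.
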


 \begin{proof}
With the previous notation, it is enough to show that $f_{n+1}(\theta_n)<g_{n+1}(\theta_n)$. Then the result follows just like in Lemma \ref{lem:theta}. To do so, first note that
\begin{equation*}
  (1-e^{\theta_n}+\theta_n e^{\theta_n})(1-e^t+te^t)^{n-1}<(1-e^t+te^t)^n,
\end{equation*}
for every $t\in(\theta_n,1]$. Then,
\begin{equation*}
  f_{n+1}(\theta_n)=(1-e^{\theta_n}+\theta_ne^{\theta_n})f_n(\theta_n)=(1-e^{\theta_n}+\theta_ne^{\theta_n})g_n(\theta_n)<g_{n+1}(\theta_n),
\end{equation*}
and the claim follows.
\end{proof}

\begin{prop}
  \label{estrategia} In Game $I$, let us assume that there are $r$
players still to play. If $M_r$ is the maximum score of the players who have
already played, then the optimum threshold for the next player is
$\max\{\theta_{r},M_r\}$.
\end{prop}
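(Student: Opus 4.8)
The plan is to argue by induction on the number $r$ of players still to play, reducing at each stage to a single-player optimal stopping problem and invoking Proposition \ref{OLA}. Fix the player about to move and set $M:=M_r$. As already noted in Section 3, from this player's point of view the game is a single-player Showcase Showdown with payoff $h(x)$ equal to his winning probability when stopping at score $x$, i.e. the probability that none of the $r-1$ players after him ends with a score exceeding $x$. Once $h$ is shown to be non-decreasing, Proposition \ref{OLA} yields the optimal threshold as $\kappa=\inf\{x:h(x)\ge\tilde h(x)\}$, with $\tilde h(x)=h(0)x+\int_x^1 h(t)\,dt$, and the whole task becomes identifying this $\kappa$ with $\max\{\theta_r,M\}$.

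The base case $r=1$ is immediate: the last player wins iff he beats $M$, so $h(x)=0$ for $x\le M$ and $h(x)=1$ for $x>M$, and a direct evaluation gives $\kappa=M=\max\{\theta_1,M\}$ since $\theta_1=0$ by Lemma \ref{lem:theta}. For the inductive step, the key observation is that the running maximum stays equal to $x$ after all $r-1$ later players have acted if and only if every one of them busts: by the induction hypothesis the player facing $j$ remaining turns plays with threshold $\max\{\theta_j,x\}\ge x$, so any non-zero score he gets is $\ge\max\{\theta_j,x\}\ge x$ and hence almost surely strictly exceeds $x$. Since such a player busts with probability $\frakp(\max\{\theta_j,x\})$ (Remark \ref{rem:P}) and a busted player leaves the maximum unchanged, I would obtain the closed form
\begin{equation*}
  h(x)=\begin{cases}0,& x\le M,\\ \prod_{j=1}^{r-1}\frakp\bigl(\max\{\theta_j,x\}\bigr),& x> M,\end{cases}
\end{equation*}
together with $h(0)=0$ (a busted player cannot win, so $\tilde h(x)=\int_x^1 h$). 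As $\frakp$ is increasing and each $\max\{\theta_j,x\}$ is non-decreasing in $x$, $h$ is non-decreasing and Proposition \ref{OLA} applies; moreover, since $\{\theta_j\}$ is increasing (Lemma \ref{lem:theta-increasing}), for $x\ge\theta_{r-1}$ every factor equals $\frakp(x)$, so $h(x)=\frakp(x)^{r-1}$ there.

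It then remains to locate $\kappa$, splitting according to whether $M\ge\theta_r$ or $M<\theta_r$. If $M\ge\theta_r$, then for $x\le M$ one has $h(x)=0<\int_M^1\frakp(t)^{r-1}\,dt=\tilde h(x)$, while for $x>M$ one has $h(x)=\frakp(x)^{r-1}$ and $\tilde h(x)=\int_x^1\frakp(t)^{r-1}\,dt$, and $h(x)>\tilde h(x)$ precisely because $x>M\ge\theta_r$ and, by Lemma \ref{lem:theta}, the increasing function $\frakp(x)^{r-1}$ and the decreasing function $\int_x^1\frakp(t)^{r-1}\,dt$ cross exactly at $\theta_r$; hence $\kappa=M$. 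If $M<\theta_r$, then for $x\ge\theta_r$ one again has $h(x)=\frakp(x)^{r-1}$ and $\tilde h(x)=\int_x^1\frakp(t)^{r-1}\,dt$, so $h(x)\ge\tilde h(x)$ with equality at $\theta_r$ (definition of $\theta_r$), giving $\kappa\le\theta_r$; and for $x<\theta_r$, monotonicity of $h$ gives $h(x)\le h(\theta_r)=\frakp(\theta_r)^{r-1}=\int_{\theta_r}^1\frakp(t)^{r-1}\,dt$, whence $\tilde h(x)=\int_x^{\theta_r}h(t)\,dt+\frakp(\theta_r)^{r-1}>h(x)$, so $\kappa\ge\theta_r$ and thus $\kappa=\theta_r$. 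In both cases $\kappa=\max\{\theta_r,M\}$, closing the induction.

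The main obstacle is the inductive derivation of the closed form for $h$: one must correctly bookkeep which threshold each later player uses (this is exactly where the induction hypothesis and the monotonicity of $\{\theta_n\}$ enter) and justify reducing the event ``no later player beats $x$'' to ``all later players bust''. Once the product formula is established, the monotonicity of $h$ is automatic and the location of $\kappa$ follows routinely from the defining equation of $\theta_r$; the only delicate points are the boundary value $h(0)=0$ and the case $M<\theta_r$ at $x=\theta_r$, where the crossing of $\frakp(x)^{r-1}$ and $\int_x^1\frakp(t)^{r-1}\,dt$ furnished by Lemma \ref{lem:theta} is used.
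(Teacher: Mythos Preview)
Your proof is correct and follows essentially the same approach as the paper: induction on $r$, derivation of the closed form $h(x)=\prod_{j=1}^{r-1}\frakp(\max\{\theta_j,x\})$ for $x>M$ via the induction hypothesis, reduction to Proposition \ref{OLA}, and a case split on $M\lessgtr\theta_r$ using the defining equation of $\theta_r$ from Lemma \ref{lem:theta}. If anything, your write-up is slightly more explicit than the paper's in two places: you spell out why ``no later player beats $x$'' collapses to ``all later players bust'' (using that each later threshold is $\ge x$ and $\xi_\tau$ is atomless on $[\tau,1]$), and in the case $M<\theta_r$ you justify the strict inequality $\tilde h(x)>h(x)$ for $x<\theta_r$ via $\int_x^{\theta_r}h>0$, whereas the paper simply asserts the identification of the infimum.
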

\begin{proof}
We argue by induction on $r$. The case $r=1$ is trivial because the optimum threshold of the last player is obviously $M_1=\max\{\theta_1,M_1\}$ (notice that, as $\theta_n$ satisfies \eqref{eq:35}, we have $\theta_1=0$). Thus, let $r>1$ and assume that the result holds for $r-1$.

We will call $h_r(x)$ the probability of winning stopping with score $S=x$. By induction hypothesis, $h_r(x)$ is the probability that the following players finish with a score of 0 when trying to exceed their respective optimal thresholds. And, obviously, if $x\leq M_r$ the probability of winning is 0, so

\begin{equation*}
h_r(x)=\left\{
\begin{array}
[c]{ccc}%
\prod_{s=1}^{r-1} \frakp{}(\max(\theta_{s},x)) & \text{if} & x>M_r, \\[0.5em]%
0 & \text{if} & M_r\geq x.
\end{array}
\right.
\end{equation*}

Now, to be in the conditions of Proposition \ref{OLA}, we must see that

\begin{equation*}
  \max(M_r,\theta_r)=\inf \left\{x \in[0,1]: h_r(x)\geq  \int_x^1 h_r(t)dt\right\}.
\end{equation*}

$\bullet$ If $M_r<\theta_r$, then, $h_r(x)=\frakp{}(x)^{r-1}$  for all  $x>\theta_r$. Thus, taking into account   that   $ \frakp{}(\theta_r)^{r-1}=  \int_{\theta_r}^1 \frakp{}(t)^{r-1}dt$, it follows that

\begin{equation*}
  \inf \left\{x \in[0,1]: h_r(x)\geq  \int_x^1 h_r(t)dt\right\}=\inf \left\{x \in[0,1]:  \frakp{}(x)^{r-1}\geq  \int_x^1 \frakp{}(x)^{r-1}dt\right\}=\theta_r,
\end{equation*}
and ultimately

\begin{equation*}
  \inf \left\{x \in[0,1]: h_r(x)\geq  \int_x^1 h_r(t)dt\right\}=\max(M_r,\pedro{}\theta_r\pedro{}).
\end{equation*}

$\bullet$ If $M_r\geq\theta_r$ then $h_r(x)\geq  \int_x^1 h_r(t)dt $ for all $x \geq  M_r$ and $0=h_r(x)<  \int_x^1 h_r(t)dt$ for all $x<M_r$ so that

\begin{equation*}
  \max(M_r,\theta_r)=M_r = \inf \left\{x \in[0,1]: h_r(x)\geq  \int_x^1 h_r(t)dt\right\}.
\end{equation*}
\end{proof}

\begin{rem}
 \label{est1} In Proposition \ref{estrategia}, if $r=n$ (i.e. it
is the first player's turn) we will obviously consider $\pedro{}M_0\pedro{}=0$. Therefore, the
optimum threshold for the first player is $\theta_{n}$.
\end{rem}

  The following consequence is obvious:

\begin{cor}
  For every $1\leq r\leq n$, if there are $r$ players still to play
and all the previous ones got a score less than $\theta_{r}$, then the optimum threshold for the next player (the one whose turn it is) is $\theta_{r}%
$.
\end{cor}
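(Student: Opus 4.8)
The plan is to derive this statement as an immediate specialization of Proposition \ref{estrategia}. That proposition already identifies, whenever $r$ players remain and $M_r$ is the maximum score among those who have already played, the optimum threshold of the next player as $\max\{\theta_r,M_r\}$. Hence the entire task reduces to showing that, under the extra hypothesis of the corollary, this maximum collapses to $\theta_r$; in other words, that $M_r\leq\theta_r$ (indeed $M_r<\theta_r$). So the first step I would take is simply to invoke Proposition \ref{estrategia} and reduce the corollary to this single inequality.

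For the inequality itself, I would argue as follows. The quantity $M_r$ is, by definition, the maximum of the scores of the $n-r$ players who have already played, and this is a maximum over a \emph{finite} set. The hypothesis asserts that every one of these scores is strictly smaller than $\theta_r$. Since the maximum of finitely many real numbers, each strictly below $\theta_r$, is again strictly below $\theta_r$, we conclude $M_r<\theta_r$, and therefore
\begin{equation*}
\max\{\theta_r,M_r\}=\theta_r,
\end{equation*}
which is exactly the claimed optimum threshold.

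Finally, I would address the boundary cases so that the statement holds uniformly for all $1\leq r\leq n$. When it is the first player's turn ($r=n$) there are no previous players, so the hypothesis is vacuous and one uses the convention $M_0=0$ from Remark \ref{est1}, whence $\max\{\theta_n,M_n\}=\theta_n$ since $\theta_n\geq 0$. When $r=1$ one has $\theta_1=0$ (as $\theta_n$ solves \eqref{eq:35}), and the conclusion still reads $\max\{\theta_1,M_1\}=\theta_1$ once the degenerate hypothesis is interpreted via the same convention.

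I do not expect any genuine obstacle here: all the mathematical content lives in Proposition \ref{estrategia}, and the corollary is a one-line consequence once one observes that a finite maximum of values below $\theta_r$ stays below $\theta_r$. The only point requiring a little care is making sure the boundary conventions ($M_0=0$, and $\theta_1=0$) are stated explicitly so the single clean formula $\max\{\theta_r,M_r\}=\theta_r$ is valid across the whole range $1\leq r\leq n$.
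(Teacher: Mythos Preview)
Your proposal is correct and matches the paper's approach: the paper states the corollary as an ``obvious'' consequence of Proposition \ref{estrategia} and gives no proof at all, so your derivation via $M_r<\theta_r\Rightarrow\max\{\theta_r,M_r\}=\theta_r$ is exactly what is intended. The only minor remark is that your discussion of the boundary case $r=1$ is unnecessary, since the hypothesis ``all previous scores are less than $\theta_1=0$'' is never satisfied (scores are nonnegative), making that case vacuous.
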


  In what follows, we will assume that all the players follow their
\emph{optimal strategy} described in Proposition \ref{estrategia}.
%\pedro{}
%\begin{defi}\label{def:F-m-r}
Given $1\leq r <n$, $1\leq m \leq r$, and $x\geq \theta_r$, we shall denote by $F_r^m(x)$ the winning probability of the $(n-r+m)$-th player when the maximum score of the first $n-r$ players is $x$.
%\end{defi}
\begin{prop}
  \label{pro:fnm}
  The following equalities hold:
\begin{equation*}
F_{r}^{m}(x)=\left\{
\begin{array}
[c]{ccc}%
e^{x}\,\int_{x}^{1}{\left(  1-e^{t}+e^{t}\,t\right)  }^{r-1}\,dt & \text{if} &
m=1, \\[0.5em]%
{\left(  1-e^{x}+e^{x}\,x\right)  }F_{r-1}^{m-1}(x)\,+e^{x}\int_{x}^{1}%
F_{r-1}^{m-1}(t)dt & \text{if} & m\geq 2.
\end{array}
\right.
\end{equation*}
\end{prop}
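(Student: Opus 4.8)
The plan is to prove each of the two relations by conditioning on the single play of the first of the $r$ players still to play, namely $A_{n-r+1}$, and then converting the resulting conditional expectation into the stated integral by means of Corollary~\ref{cor:exp}(i). The fact that makes the whole computation clean is the following monotonicity observation: since $x\geq\theta_r$ and, by Lemma~\ref{lem:theta-increasing}, $\theta_{r-1}<\theta_r$, every score $s$ that can become the running maximum after $A_{n-r+1}$ plays satisfies $s\geq x\geq\theta_r>\theta_{r-1}$. By Proposition~\ref{estrategia} this forces the optimal threshold of every later player to equal the running maximum; in particular $A_{n-r+1}$ himself plays with threshold $\max\{\theta_r,x\}=x$, so his score is distributed as $\xi_x$ (Lemma~\ref{lem:xi-kappa}): an atom of mass $\mathbf{P}(x)$ at $0$ together with density $e^x$ on $(x,1]$. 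The same inequality guarantees that every argument at which $F_{r-1}^{m-1}$ is evaluated lies in $[\theta_{r-1},1]$, so the recursion is well defined (note $r\geq 2$ whenever $m\geq 2$, so $1\leq r-1<n$ and $1\leq m-1\leq r-1$).

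For the case $m=1$ I would argue that $A_{n-r+1}$ wins exactly when he stops with some score $s\in(x,1]$ and none of the remaining $r-1$ players subsequently beats $s$. The probability of this last event is precisely the quantity $h_r(s)=\prod_{j=1}^{r-1}\mathbf{P}(\max(\theta_j,s))$ introduced in the proof of Proposition~\ref{estrategia}, and since $s>\theta_r>\theta_j$ for every $j\leq r-1$ it collapses to $\mathbf{P}(s)^{r-1}=(1-e^s+e^s s)^{r-1}$. Busting contributes nothing: if $x>0$ a final score of $0$ cannot beat the positive maximum, so $h_r(0)=0$, while if $x=0$ (only possible when $r=1$) the bust probability $\mathbf{P}(0)=0$ anyway. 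Writing the winning probability as $\mathbb{E}[h_r(\xi_x)]$ with $h_r(0)=0$ and $h_r(s)=\mathbf{P}(s)^{r-1}$ for $s>0$, Corollary~\ref{cor:exp}(i) yields $F_r^1(x)=\mathbf{P}(x)\cdot 0+e^x\int_x^1(1-e^t+e^t t)^{r-1}\,dt$, which is the asserted expression.

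For $m\geq 2$ I would instead track the target player $A_{n-r+m}$ while conditioning on the outcome of $A_{n-r+1}$. If $A_{n-r+1}$ busts (probability $\mathbf{P}(x)=1-e^x+e^x x$) the maximum remains $x$ and $r-1$ players are left, so $A_{n-r+m}$ is now the $(m-1)$-th among them and his winning probability is $F_{r-1}^{m-1}(x)$; if $A_{n-r+1}$ stops at $s\in(x,1]$ the maximum becomes $s$ and the corresponding probability is $F_{r-1}^{m-1}(s)$. Setting $\Phi(0)=F_{r-1}^{m-1}(x)$ and $\Phi(s)=F_{r-1}^{m-1}(s)$ for $s>0$, the winning probability of $A_{n-r+m}$ equals $\mathbb{E}[\Phi(\xi_x)]$, and Corollary~\ref{cor:exp}(i) delivers $F_r^m(x)=(1-e^x+e^x x)F_{r-1}^{m-1}(x)+e^x\int_x^1 F_{r-1}^{m-1}(t)\,dt$. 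I expect the main obstacle to be purely bookkeeping rather than analytic: keeping the player index $n-r+m$, the number-remaining index $r$, and the position index $m$ mutually consistent as the maximum score and the remaining pool evolve, and checking at each reduction that the running maximum stays above $\theta_{r-1}$ so that Proposition~\ref{estrategia} continues to force the threshold-equals-maximum behaviour on which the argument relies.
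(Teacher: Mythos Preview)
Your argument is correct and follows essentially the same route as the paper: condition on the score $\xi_x$ of player $A_{n-r+1}$, use Lemma~\ref{lem:theta-increasing} and Proposition~\ref{estrategia} to force all later thresholds to equal the running maximum, and then apply Corollary~\ref{cor:exp} to evaluate the resulting expectation. The only cosmetic difference is that you invoke part~(i) of Corollary~\ref{cor:exp} in both cases via an auxiliary function, whereas the paper uses part~(ii) for the $m\geq 2$ step; your handling of the boundary case $h_r(0)=0$ is in fact slightly more explicit than the paper's.
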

%\pedro{}
  \begin{proof}
For $m=1$, in order for the $(n-r+1)$-th player to win, all the remaining $r-1$ players must obtain a score of $0$. We know that, if $\xi_x\neq 0$, then $\xi_x>x\geq \theta_r$. Consequently, due to Lemma \ref{lem:theta-increasing} all the subsequent players will use $\xi_x$ as their greed threshold because of Proposition \ref{estrategia}. Thus, using Corollary \ref{cor:exp} and \eqref{eq:30}, we get that
\begin{equation*}
  F_{r}^{1}(x)=\mathbb{E}[\frakp{}(\xi_x)^{r-1}]=
  %(1-\frakp{}(x))\int_{x}^{1}\frac{\frakp{}(t)^{r-1}}{1-x}\,dt=
  e^{x}\,\int_{x}^{1}{\left(  1-e^{t}+e^{t}\,t\right)  }%
^{r-1}\,dt.\end{equation*}
Now, let us assume that $m>1$ and denote by $W_m$ the event ``the $(n-r+m)$-th player wins''. Since the $(n-r+1)$-th player used $x$ as optimum threshold, his score is $\xi_x$. Then, the Law of total probability gives:
\begin{equation*}
F_r^m(x)=P(W_{m})=P(W_m|\xi_x=0)P(\xi_x=0) + P(W_m|\xi_x>0)P(\xi_x>0).
\end{equation*}
Furthermore, on one hand, it is straightforward that
\begin{equation*}
P(W_m|\xi_x=0)=F_{r-1}^{m-1}(x),
\end{equation*}
while on the other,
\begin{equation*}
P(W_m|\xi_x>0)=\mathbb{E}[(F_{r-1}^{m-1}(\xi_{x})|\xi_x>0],
\end{equation*}
because, in this case, the maximum score obtained by the first $n-r+1$ players is $\xi_x\geq\theta_{r-1}$.
Combining both equalities and using Corollary \ref{cor:exp} we obtain the desired formula:
\begin{multline*}
F_{r}^{m}(x)=\frakp{}(x) F_{r-1}^{m-1}(x)+(1-\frakp{}(x)) \int_{x}%
^{1}\frac{F_{r-1}^{m-1}(t)}{1-x}dt=\\
{\left(  1-e^{x}+e^{x}\,x\right)  }F_{r-1}^{m-1}(x)\,+e^{x}\int_{x}^{1}%
F_{r-1}^{m-1}(t)dt.
\end{multline*}
\end{proof}

  With this proposition, we are in the condition to prove the main
result of this section. Let us denote by $P_{n}^{m}$ the winning probability
of the $m$-player in Game I if there are $n$ players. Then, we have the
following.

\begin{prop}
 \label{prop:pnm} The winning probability of player $m$ in Game I,
$1\leq m\leq n,$ using the optimal policy described in Proposition
\ref{estrategia} is:
\begin{equation*}
P_{n}^{m}=\left\{
\begin{array}
[c]{ccc}%
e^{\theta_{n}}\left(  1-e^{\theta_{n}}+e^{\theta_{n}}\ \theta_{n}\right)
^{n-1} & \text{if} & m=1, \\
\left(  1-e^{\theta_{n}}+e^{\theta_{n}}\ \theta_{n}\right)  P_{n-1}%
^{m-1}+e^{\theta_{n}}\int_{\theta_{n}}^{1}F_{n-1}^{m-1}(t)\,dt & \text{if} &
m\geq 2.
\end{array}
\right.
\end{equation*}

\end{prop}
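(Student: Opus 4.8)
The plan is to condition on the score $\xi_{\theta_n}$ of the first player, who, by Remark~\ref{est1}, uses the threshold $\theta_n$ (the maximum before his turn is $0$, so Proposition~\ref{estrategia} gives optimal threshold $\max\{\theta_n,0\}=\theta_n$). I would then treat the cases $m=1$ and $m\ge 2$ separately, mirroring the proof of Proposition~\ref{pro:fnm}; the only difference is that here the starting maximum is $0$ rather than a prescribed $x\ge\theta_r$, so the first player's threshold is fixed at $\theta_n$ and the resulting quantities are unconditional.

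For $m=1$, the first player wins precisely when his score $\xi_{\theta_n}$ is positive and every one of the remaining $n-1$ players busts. Since any positive score satisfies $\xi_{\theta_n}\ge\theta_n>\theta_j$ for all $j<n$ (Lemma~\ref{lem:theta-increasing}), every subsequent player adopts the running maximum as his threshold, so conditional on $\xi_{\theta_n}=s$ the probability that all later players bust is $\frakp{}(s)^{n-1}$. Hence $P_n^1=\mathbb{E}[\frakp{}(\xi_{\theta_n})^{n-1}]$, where the bust case $\xi_{\theta_n}=0$ contributes nothing because $\frakp{}(0)=0$ by \eqref{eq:30}. Applying Corollary~\ref{cor:exp}(i) with $h(x)=\frakp{}(x)^{n-1}$ kills the $\frakp{}(0)^{n-1}$ term and leaves $P_n^1=e^{\theta_n}\int_{\theta_n}^1\frakp{}(t)^{n-1}\,dt$; the defining equation of $\theta_n$ (Lemma~\ref{lem:theta}) rewrites the integral as $\frakp{}(\theta_n)^{n-1}$, giving the claimed closed form.

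For $m\ge 2$, I would split according to whether the first player busts. If $\xi_{\theta_n}=0$, the board is reset to maximum $0$ with $n-1$ players left to act; by Proposition~\ref{estrategia} these play a fresh $(n-1)$-player game and the original player $m$ becomes its $(m-1)$-th player, so this branch contributes $\frakp{}(\theta_n)\,P_{n-1}^{m-1}$. If $\xi_{\theta_n}=s>0$, then the maximum is $s\ge\theta_n\ge\theta_{n-1}$, player $m$ is the $(m-1)$-th of the $n-1$ remaining players, and his winning probability is exactly $F_{n-1}^{m-1}(s)$ by definition of $F$. Taking the conditional expectation with Corollary~\ref{cor:exp}(ii) and multiplying by $P(\xi_{\theta_n}>0)=1-\frakp{}(\theta_n)=e^{\theta_n}(1-\theta_n)$, the factor $1-\theta_n$ cancels the denominator, so this branch contributes $e^{\theta_n}\int_{\theta_n}^1 F_{n-1}^{m-1}(t)\,dt$. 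Adding the two branches yields the recursion in the statement.

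The main obstacle is the identity $P(W_m\mid\xi_{\theta_n}=0)=P_{n-1}^{m-1}$: one must argue carefully that a bust by the first player genuinely restores the initial configuration of an $(n-1)$-player game, so that the remaining players switch to the fresh optimal thresholds $\theta_{n-1},\theta_{n-2},\dots$ (rather than to the now-irrelevant running maximum $0$), and that relabelling player $m$ as the $(m-1)$-th survivor is consistent with the definition of $P_{n-1}^{m-1}$. This is exactly what forces the case $m\ge 2$ to separate the bust branch, which produces a $P$-term, from the positive branch, which produces an $F$-term, since $F_{n-1}^{m-1}$ is only defined for arguments $\ge\theta_{n-1}$ and cannot be evaluated at $0$. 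Everything else reduces to direct substitution into Corollary~\ref{cor:exp}, together with the algebraic simplification $1-\frakp{}(\theta_n)=e^{\theta_n}(1-\theta_n)$ and Lemma~\ref{lem:theta}.
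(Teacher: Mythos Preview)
Your proposal is correct and follows essentially the same approach as the paper, which simply states that the argument mirrors that of Proposition~\ref{pro:fnm} with $\xi_{\theta_n}$ in place of $\xi_x$. You are in fact more explicit than the paper about the one genuinely new ingredient: in the bust branch $\xi_{\theta_n}=0$ the running maximum returns to $0$, so the remaining $n-1$ players face a fresh game and the conditional winning probability is $P_{n-1}^{m-1}$ rather than an $F$-value, exactly as you discuss.
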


\begin{proof}
  \pedro{}
  The proof is essentially the same as for Proposition \ref{pro:fnm}, using $\xi_{\theta_n}$ as greed thresholds for each player, and expected values instead of distribution functions.
\pedro{}
\end{proof}

\pedro{}
\begin{rem}
  Assume none of the first $n-r$ players got a score higher than $\theta_r$. Then the winning probability of player $n-r+1$ is the same as the winning probability of the first player in the $r$-player game.
\end{rem}
\pedro{}

Table \ref{tab:threshold-10} shows the optimal greed threshold
$\theta_{n}$ of the first player in an $n$-player game (or when there are
still $n$ players and none of the previous ones got a positive score), and the
winning probabilities $P_{n}^{m}$ of the $m$-th player in Game I, for
$n=1,\ldots, 10$.
\begin{table}[ptb]
  \centering
\caption{Approximate values of $\theta_{n}$ and winning probabilities ($n\leq10$) for each player.}%
\label{tab:threshold-10}%
\begin{tabular}
[c]{cccccccccc}%
$n$ & $2$ & $3$ & $4$ & $5$ & $6$ & $7$ & $8$ & $9$ & $10$\\\hline
\multicolumn{1}{c}{$\theta_{n}$} & \multicolumn{1}{c}{$0.5706$} &
\multicolumn{1}{c}{$0.6879$} & \multicolumn{1}{c}{$0.7487$} &
\multicolumn{1}{c}{$0.7871$} & \multicolumn{1}{c}{$0.8141$} &
\multicolumn{1}{c}{$0.8342$} & \multicolumn{1}{c}{$0.8499$} &
\multicolumn{1}{c}{$0.8626$} & \multicolumn{1}{c}{$0.8730$}\\\hline
\multicolumn{1}{c}{$P_{n}^{1}$} & \multicolumn{1}{c}{$0.4250$} &
\multicolumn{1}{c}{$0.2859$} & \multicolumn{1}{c}{$0.2176$} &
\multicolumn{1}{c}{$0.1764$} & \multicolumn{1}{c}{$0.1486$} &
\multicolumn{1}{c}{$0.1285$} & \multicolumn{1}{c}{$0.1133$} &
\multicolumn{1}{c}{$0.1013$} & \multicolumn{1}{c}{$0.0917$}\\\hline
\multicolumn{1}{c}{$P_{n}^{2}$} & \multicolumn{1}{c}{$0.5750$} &
\multicolumn{1}{c}{$0.3248$} & \multicolumn{1}{c}{$0.2357$} &
\multicolumn{1}{c}{$0.1866$} & \multicolumn{1}{c}{$0.1551$} &
\multicolumn{1}{c}{$0.1329$} & \multicolumn{1}{c}{$0.1165$} &
\multicolumn{1}{c}{$0.1037$} & \multicolumn{1}{c}{$0.0936$}\\\hline
\multicolumn{1}{c}{$P_{n}^{3}$} & \multicolumn{1}{c}{} &
\multicolumn{1}{c}{$0.3893$} & \multicolumn{1}{c}{$0.2570$} &
\multicolumn{1}{c}{$0.1978$} & \multicolumn{1}{c}{$0.1619$} &
\multicolumn{1}{c}{$0.1375$} & \multicolumn{1}{c}{$0.1197$} &
\multicolumn{1}{c}{$0.1061$} & \multicolumn{1}{c}{$0.0954$}\\\hline
\multicolumn{1}{c}{$P_{n}^{4}$} & \multicolumn{1}{c}{} &
\multicolumn{1}{c}{} & \multicolumn{1}{c}{$0.2897$} &
\multicolumn{1}{c}{$0.2104$} & \multicolumn{1}{c}{$0.1691$} &
\multicolumn{1}{c}{$0.1422$} & \multicolumn{1}{c}{$0.1230$} &
\multicolumn{1}{c}{$0.1085$} & \multicolumn{1}{c}{$0.0972$}\\\hline
\multicolumn{1}{c}{$P_{n}^{5}$} & \multicolumn{1}{c}{} &
\multicolumn{1}{c}{} & \multicolumn{1}{c}{} & \multicolumn{1}{c}{$0.2289$}
& \multicolumn{1}{c}{$0.1770$} & \multicolumn{1}{c}{$0.1470$} &
\multicolumn{1}{c}{$0.1263$} & \multicolumn{1}{c}{$0.1109$} &
\multicolumn{1}{c}{$0.0991$}\\\hline
\multicolumn{1}{c}{$P_{n}^{6}$} & \multicolumn{1}{c}{} &
\multicolumn{1}{c}{} & \multicolumn{1}{c}{} & \multicolumn{1}{c}{} &
\multicolumn{1}{c}{$0.1883$} & \multicolumn{1}{c}{$0.1523$} &
\multicolumn{1}{c}{$0.1297$} & \multicolumn{1}{c}{$0.1133$} &
\multicolumn{1}{c}{$0.1008$}\\\hline
\multicolumn{1}{c}{$P_{n}^{7}$} & \multicolumn{1}{c}{} &
\multicolumn{1}{c}{} & \multicolumn{1}{c}{} & \multicolumn{1}{c}{} &
\multicolumn{1}{c}{} & \multicolumn{1}{c}{$0.1596$} &
\multicolumn{1}{c}{$0.1333$} & \multicolumn{1}{c}{$0.1158$} &
\multicolumn{1}{c}{$0.1026$}\\\hline
\multicolumn{1}{c}{$P_{n}^{8}$} & \multicolumn{1}{c}{} &
\multicolumn{1}{c}{} & \multicolumn{1}{c}{} & \multicolumn{1}{c}{} &
\multicolumn{1}{c}{} & \multicolumn{1}{c}{} & \multicolumn{1}{c}{$0.1382$}
& \multicolumn{1}{c}{$0.1184$} & \multicolumn{1}{c}{$0.1044$}\\\hline
\multicolumn{1}{c}{$P_{n}^{9}$} & \multicolumn{1}{c}{} &
\multicolumn{1}{c}{} & \multicolumn{1}{c}{} & \multicolumn{1}{c}{} &
\multicolumn{1}{c}{} & \multicolumn{1}{c}{} & \multicolumn{1}{c}{} &
\multicolumn{1}{c}{$0.1218$} & \multicolumn{1}{c}{$0.1063$}\\\hline
\multicolumn{1}{c}{$P_{n}^{10}$} & \multicolumn{1}{c}{} &
\multicolumn{1}{c}{} & \multicolumn{1}{c}{} & \multicolumn{1}{c}{} &
\multicolumn{1}{c}{} & \multicolumn{1}{c}{} & \multicolumn{1}{c}{} &
\multicolumn{1}{c}{} & \multicolumn{1}{c}{$0.1088$}\\\hline
\end{tabular}
 \end{table}

\begin{rem}
  The \pedro{}threshold strategies described in Proposition \ref{estrategia}\pedro{} constitute a Nash
  equilibrium: no unilateral deviation will increase the winning probability of the
  defector. However, \pedro{}though optimal\pedro{} for each single player, \pedro{}each
  value $P^r_n$ is the winning probability for player $r$\pedro{} only if every player
  plays optimally according to his interests without mistakes or collaboration (collusion)
  within some group.  The deviation of a group of players from their individual optimum
  policy can increase the winning probability of some players and decrease that of
  others. To illustrate this, we now carry out a detailed study of the 3-player game, in
  which collusion between two players can modify the winning probabilities of the
  other one.
\end{rem}
\pedro{}Notice, however, that the winning probability of each player using his optimal threshold $\theta_i$ (shown in Table 1) does not represent the value of the game for each, as their winning probabilities depend on the behavior of the other players. There exist coalitions, already in the $3$-player game which decrease the winning probability of the remaining player, thus increasing the probability of one of the allies win. This is what we study in the next two subsections.\pedro{}

\subsection{Coalition between first and second player}
The first and second players can decrease the third one's winning probability using some strategy. Let $\ttx{}(x) $
be the second player's optimal threshold conditional to the first one having obtained score $x$, assuming their common aim is to decrease the third  player's winning chance. Let
$\mathbf{p}_{3} (x)$ be the third player's losing probability assuming the first one has obtained score $x$ and the second one uses threshold
$\ttx{}(x)$, which is:
\begin{equation}\label{eq:50}
\mathbf{p}_{3} (x)=
P(\xi_{\ttx{}(x)}=0)\cdot P(\xi_{x}=0)+
P(\xi_{\ttx{}(x)}>0)
\cdot \mathbb{E}
(\frakp{}
(\xi_{\ttx{}(x)})|\xi_{\ttx{}(x)}>0),
\end{equation}
and by Corollary \ref{cor:exp}:
\begin{equation*}
\mathbf{p}_{3} (x)= \frakp{}(\ttx{}(x)) \frakp{}(x) + e^{ \ttx{}(x) } \int_{\ttx{}(x)}^{1} \frakp{}(t) dt.
\end{equation*}

Assume the first player has score $x$ and plays once more. The probability of the third player not winning under this assumption is given by:
\begin{equation*}
x\cdot\mathbf{p}_{3} (0) + \int_{x}^{1} \mathbf{p}_{3} (t) dt.
\end{equation*}
Thus, by Proposition \ref{OLA}, the first player's optimal threshold
$\Theta_{1}$, satisfies:
\begin{equation*}
 \mathbf{p}_{3} (\Theta_{1})=\Theta_{1} \mathbf{p}_{3} (0) + \int_{\Theta_{1}}^{1} \mathbf{p}_{3} (t) dt,
\end{equation*}
and the probability of the third player not winning is by the same Proposition \ref{OLA}:
\begin{equation*}
(\mathbf{p}_{3}(\Theta_{1})-\mathbf{p}_{3}(0 )) e^{\Theta_{1}} + \mathbf{p}_3(0).
\end{equation*}
Computing the value of $\Theta_{1}$ is very laborious, as one needs to calculate  $\ttx{}(x)$ and $\mathbf{p}_{3} (x)$ beforehand.

\paragraph{Calculation of $\ttx{}(x)$}
If the first player has stopped with score $x$ and the second with $y>x$, the probability that the third one does not win is $\frakp{}(y)$. Applying Proposition \ref{OLA}, $\ttx{}(x)$ is the solution of the equation:
\begin{equation*}
\frakp{}(y)= y \frakp{}(x)+ \int_{y}^{1} \frakp{}(t)dt,
\end{equation*}
which together with \eqref{eq:30},
%\begin{equation*}
%-(y-2) e^{y}+(x-1) y e^{x}-e+1=e^{x} x-e^{x}+1,
%\end{equation*}
gives $\ttx{}(x)$ implicitly:
\begin{equation*}
-e^{\ttx{}(x)}(2\ttx{}(x)-3)+\ttx{}(x)e^{x}(x-1)=e.
\end{equation*}
Foregoing the uninteresting details, we have obtained%, in two different ways
:
\begin{equation*}
\Theta_{1}=0.63386...
\end{equation*}
so that the third player's winning probability is:
\begin{equation*}
1- (\mathbf{p}_{3}(\Theta_{1})-\mathbf{p}_{3}(0 )) e^{\Theta_{1}} - \mathbf{p}_3(0)
= 0.3867...
\end{equation*}
slightly less than $0.3893$ ($P_3^3$ in Table 1), which is the one under the Nash equilibrium.

\subsection{Coalition between first and third player}
There is also a possible coalition between the first and third players harmful to the second one. In this case, however, the third player's only option is to try and improve on the second one's score, which would become his playing threshold in this strategy. The second player's optimal strategy is the one described in Proposition \ref{estrategia}, and consists in using the threshold
$\max(\theta_{2},X_{1})$, where $X_{1}$ is the first player's score.

The second player's winning probability assuming the first one's score is $x\geq\theta_{2}$, is given by:
\begin{equation*}
\widehat{h}(x)= (1-\frakp{}(x)) \int_{x} ^{1} \frac{ \frakp{}(t) }{1-x}dt=
e^{x} \int_{x} ^{1}\frakp{}(t) dt=-e^{x} \left( e^{x} (x-2)+x+e-1\right)
.
\end{equation*}
The strategy under discussion aims to lower this value, which requires finding a new greed threshold for the first player.

If the first player has score $x$ and plays once more, then the probability of the second player not winning is  $x (1-\vartheta)+ \int_{x}^{1} (1-\widehat{h}(t))
dt$, where $\vartheta=0.4250...$. In this formula, $\vartheta$ is the winning probability of the second player if he played just against the third one (what in Table 1 is $P_2^1$). Setting
$h(t):=1-\widehat{h}(t)$, the first player's required greed threshold will be given by $\varrho$ satisfying, by Proposition \ref{OLA}:
\begin{equation*}
h(\varrho)= \varrho(1-\vartheta)+ \int_{\varrho}^{1} h(t) dt,
\end{equation*}
whose value is, approximately,
\begin{equation*}
\varrho = 0.75017...
\end{equation*}
and the second player's winning probability assuming the first one uses this threshold is:
\begin{equation*}
\frakp{}(\varrho) \vartheta+(1-\frakp{}(\varrho) )
\int_{\varrho}^{1} \frac{ \widehat{h}(t) }{1-\varrho}dt=0.32262 ...
\end{equation*}
slightly less than $P_3^2=0.3248$ (Table 1), which is the second player's winning probability in the Nash equilibrium.

\section{Game II --- No-information game}

 In this version, the $n$ players act simultaneously with no
information on the results of the others. Unlike the studies \cite{Mazalov, Seregrina}, we
consider the no-winner possibility, that is, there is a global tie when all
the players get a score of $0$. We study in this section the varieties
introduced in Section 2: For Games II.1 and II.2, if all the players get a
score of $0$, there is no payoff, while Game II.3 is an asymmetric version in
which a single player has advantage: he wins the payoff in case of global tie
at $0$. Throughout the section the number of players is denoted, as above, by
$n$.

\subsection{  Game II.1: Non-constant sum}

  In this version, if there is a winner, he receives a payoff of
$1$ from an external agent. Hence, each player's expected payout is his
probability of winning.

\pedro{}
\begin{lem}
 \label{lem:alpha} For all $n>0$, the equation
\begin{equation}\label{eq:64}
\left(  1+e^{x}(x-1)\right)  ^{n-1}=\dfrac{1-\left(  1+e^{x}(x-1)\right)
^{n}}{ne^{x}}, %\label{eq:theta}%
\end{equation}
has a single solution $\alpha_{n}\in (0,1)$.
\end{lem}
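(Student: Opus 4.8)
The plan is to follow exactly the template of the proof of Lemma \ref{lem:theta}: rewrite \eqref{eq:64} as an equality between a strictly increasing and a strictly decreasing function, and then invoke the intermediate value theorem together with strict monotonicity to obtain existence and uniqueness simultaneously. Throughout I write $\frakp{}(x)=1+e^{x}(x-1)$ as in \eqref{eq:30}. The first step is to record the elementary properties of $\frakp{}$: one checks $\frakp{}(0)=0$, $\frakp{}(1)=1$, and $\frakp{}'(x)=xe^{x}>0$ on $(0,1]$, so that $\frakp{}$ is a strictly increasing bijection of $[0,1]$ onto itself; in particular $0<\frakp{}(x)<1$ for every $x\in(0,1)$.

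Next, set $L(x)=\frakp{}(x)^{n-1}$ and $R(x)=\dfrac{1-\frakp{}(x)^{n}}{ne^{x}}$, so that \eqref{eq:64} reads $L(x)=R(x)$. For $n\geq 2$ the function $L$ is strictly increasing, being a positive power of the strictly increasing positive function $\frakp{}$. For $R$, I would argue that the numerator $1-\frakp{}(x)^{n}$ is strictly decreasing and strictly positive on $(0,1)$ (because $0<\frakp{}(x)<1$), while the denominator $ne^{x}$ is strictly increasing and positive; a ratio of a positive strictly-decreasing function over a positive strictly-increasing one is strictly decreasing, so $R$ is strictly decreasing. Evaluating at the endpoints, $L(0)=0<1/n=R(0)$ and $L(1)=1>0=R(1)$. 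Hence $L-R$ is continuous and strictly increasing on $[0,1]$ with $(L-R)(0)<0<(L-R)(1)$, so it vanishes at exactly one point $\alpha_{n}\in(0,1)$, as claimed.

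The single delicate point is the degenerate case $n=1$, where $L\equiv 1$ and the equation collapses to $1=(1-\frakp{}(x))/e^{x}=1-x$, whose only root is the boundary value $x=0$; thus the conclusion $\alpha_{n}\in(0,1)$ is really meant for $n\geq 2$, in the same spirit as Lemma \ref{lem:theta}, where $\theta_{1}=0$ already sits at the boundary. I do not expect any genuine obstacle beyond this bookkeeping, since the mathematical content is just monotonicity plus the intermediate value theorem. As a completely equivalent alternative (and a useful cross-check of the monotonicity claims), one may clear denominators and study $G(x):=ne^{x}\frakp{}(x)^{n-1}+\frakp{}(x)^{n}-1$: here $G(0)=-1<0$, $G(1)=ne>0$, and using $\frakp{}'(x)=xe^{x}$ one factors $G'(x)=ne^{x}\frakp{}(x)^{n-2}\bigl(\frakp{}(x)+(n-1)xe^{x}+x\,\frakp{}(x)\bigr)>0$ on $(0,1)$, so $G$ is strictly increasing and again has a unique root in $(0,1)$.
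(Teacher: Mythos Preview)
Your proof is correct and follows essentially the same approach as the paper: define the left- and right-hand sides as functions of $x$, observe they are respectively increasing and decreasing on $[0,1]$ with opposite boundary inequalities, and conclude by the intermediate value theorem (with the $n=1$ case handled separately as the degenerate boundary root). You actually supply more justification for the monotonicity of $R$ than the paper does, and your alternative formulation via $G(x)$ is a nice cross-check, but the core argument is identical.
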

\begin{proof}
Put $f_n(x)=\left(  1+e^x(x-1)\right)  ^{n-1}$ and $g_n(x)=\dfrac{1-\left(1+e^x(x-1)\right)^n}{ne^x}$. If $n=1$, then $f_1(x)=1$ and $g_1(x)=1-x$ so $x=0$ is the only solution. Now, for $n>1$, we have that $f_n(0)=0$, $f_n(1)=1$, $g_n(0)=1/n>0$ and $g_n(1)=0$. Since $f_n$ is increasing and $g_n$ is decreasing in $[0,1]$ for every $n>1$, the result follows.
\end{proof}
%\begin{defi}\label{def:alpha}
%  The solution provided by Lemma \ref{lem:alpha} will be denoted $\alpha_{n}$.
%\end{defi}
\pedro{}
\begin{lem}
 \label{lem:alpha-increasing} The sequence $\{\alpha_{n}\}_{n>0}$ from Lemma \ref{lem:alpha}
is strictly increasing.
\end{lem}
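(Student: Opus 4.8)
The plan is to reuse the mechanism of Lemma~\ref{lem:theta-increasing} verbatim. With the notation of Lemma~\ref{lem:alpha}, write $f_n(x)=\left(1+e^x(x-1)\right)^{n-1}$ and $g_n(x)=\dfrac{1-\left(1+e^x(x-1)\right)^n}{ne^x}$, so that $f_n$ is increasing, $g_n$ is decreasing, and $\alpha_n$ is their unique crossing point in $(0,1)$. Since $f_{n+1}-g_{n+1}$ is then strictly increasing and vanishes exactly at $\alpha_{n+1}$, it suffices to establish the single inequality $f_{n+1}(\alpha_n)<g_{n+1}(\alpha_n)$: this forces the crossing point $\alpha_{n+1}$ to lie strictly to the right of $\alpha_n$, giving $\alpha_{n+1}>\alpha_n$.

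To make that inequality checkable I would abbreviate $p:=\frakp{}(\alpha_n)=1+e^{\alpha_n}(\alpha_n-1)$ and $E:=e^{\alpha_n}$, noting $p\in(0,1)$ because $\frakp{}$ is strictly increasing with $\frakp{}(0)=0$ and $\frakp{}(1)=1$ while $\alpha_n\in(0,1)$. The defining relation $f_n(\alpha_n)=g_n(\alpha_n)$ reads $nEp^{n-1}=1-p^n$, which lets me eliminate $E$ through $E=(1-p^n)/(np^{n-1})$. The target inequality $f_{n+1}(\alpha_n)<g_{n+1}(\alpha_n)$, that is $p^n<\dfrac{1-p^{n+1}}{(n+1)E}$, then becomes, after clearing denominators and substituting for $E$,
\[
(n+1)\,p\,(1-p^n)<n\,(1-p^{n+1}),
\]
which rearranges to the purely polynomial statement
\[
\phi(p):=p^{n+1}-(n+1)p+n>0.
\]

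Finally I would dispatch this last inequality by elementary calculus: $\phi(1)=1-(n+1)+n=0$, while $\phi'(p)=(n+1)(p^n-1)<0$ for $p\in(0,1)$, so $\phi$ is strictly decreasing on $(0,1)$ and hence $\phi(p)>\phi(1)=0$ there. This yields $f_{n+1}(\alpha_n)<g_{n+1}(\alpha_n)$ and therefore $\alpha_{n+1}>\alpha_n$. I expect no genuine obstacle here; the only care required is in the bookkeeping of the algebraic elimination of $E$ and in checking the sign of $\phi'$, since everything collapses cleanly onto the monotone polynomial $\phi$ and no delicate estimate on $\alpha_n$ itself is needed.
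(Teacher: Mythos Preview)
Your proposal is correct and follows essentially the same route as the paper: both reduce to showing $f_{n+1}(\alpha_n)<g_{n+1}(\alpha_n)$, use the defining relation at $\alpha_n$ to eliminate the exponential, and arrive at the elementary inequality $(n+1)p-p^{n+1}<n$ for $p=\frakp{}(\alpha_n)\in(0,1)$ (your $\phi(p)>0$ is the same inequality rearranged). The only cosmetic difference is that you make the algebraic elimination of $E=e^{\alpha_n}$ explicit, while the paper hides it behind ``after some straightforward computations''.
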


  \begin{proof}
    With the previous notation, it is enough to show that $f_{n+1}(\alpha_n)<g_{n+1}(\alpha_n)$. First note that $f_{n+1}(\alpha_n)=\left(1+e^{\alpha_n}(\alpha_n-1)\right)f_n(\alpha_n)$ and by definition of $\alpha_n$ (Lemma \ref{lem:alpha}),
    \begin{equation*}
      f_{n+1}(\alpha_n)=\left(1+e^{\alpha_n}(\alpha_n-1)\right)g_n(\alpha_n).
    \end{equation*}
    Then, after some straightforward computations, we see that $f_{n+1}(\alpha_n)<g_{n+1}(\alpha_n)$ if and only if
\begin{equation*}(n+1)\left(1+e^{\alpha_n}(\alpha_n-1)\right)-\left(1+e^{\alpha_n}(\alpha_n-1)\right)^{n+1}<n.\end{equation*}
Now, for every \pedro{}$t\in(0,1)$\pedro{}, we have that $(n+1)t-t^{n+1}<n$ \pedro{}because the left-hand side is an increasing function in $t$, whose value for $t=1$ is $n$\pedro{}, and we obtain the result.
\end{proof}

\begin{prop}
  \label{pro:equilibrium-game-2-1} Game II.1 admits a Nash
equilibrium with equal thresholds $\alpha_{n}$ for all players. Moreover, the
winning probability of each player in that Nash equilibrium is
\begin{equation}\label{eq:67}
P_{n}:=\frac{1-\left(  1-e^{\alpha_{n}}+e^{\alpha_{n}}\,{\alpha_{n}}\right)
^{n}}{n}.%
\end{equation}
For this game, we shall call \emph{Nash threshold} the value $\alpha_{n}$.
\end{prop}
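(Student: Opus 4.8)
The plan is to reduce the search for a symmetric equilibrium to the single-player optimal-stopping problem already solved in Proposition \ref{OLA}. First I would fix a common threshold $\kappa$ for players $A_2,\dots,A_n$ and compute the payoff $h(x)$ that the remaining player gets by stopping with score $x$, namely his probability of being the strict maximum. Each opponent's score is distributed as $\xi_\kappa$, so by Lemma \ref{lem:xi-kappa} we have $P(\xi_\kappa<x)=1+e^{\kappa}(x-1)$ for $x\in[\kappa,1]$, a tie at a positive value having probability zero. Hence $h(0)=0$, $h$ is the constant $\frakp{}(\kappa)^{n-1}$ on $(0,\kappa]$ (an opponent has score $<x$ only by busting), and $h(x)=\bigl(1+e^{\kappa}(x-1)\bigr)^{n-1}$ for $\kappa\le x\le1$. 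In particular $h$ is non-decreasing, so Proposition \ref{OLA} applies with $\tilde h(x)=\int_x^1 h(t)\,dt$ (since $h(0)=0$).

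Next I would impose self-consistency: for $\kappa$ to support a symmetric equilibrium, the best response against the profile $(\kappa,\dots,\kappa)$ must again be $\kappa$. By Proposition \ref{OLA} the best-response threshold is $\inf\{x:h(x)\ge\tilde h(x)\}$, and since $h$ is strictly increasing and $\tilde h$ strictly decreasing near $\kappa$, this equals $\kappa$ exactly when $h(\kappa)=\tilde h(\kappa)$, i.e. $\frakp{}(\kappa)^{n-1}=\int_\kappa^1\bigl(1+e^\kappa(t-1)\bigr)^{n-1}\,dt$. The substitution $u=1+e^\kappa(t-1)$, with $du=e^\kappa\,dt$, $u=\frakp{}(\kappa)$ at $t=\kappa$ and $u=1$ at $t=1$, turns the integral into $\tfrac{1}{e^\kappa}\int_{\frakp{}(\kappa)}^1 u^{n-1}\,du=\tfrac{1-\frakp{}(\kappa)^n}{n e^\kappa}$, so the equilibrium condition becomes exactly Equation \eqref{eq:64}. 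By Lemma \ref{lem:alpha} this has the unique root $\kappa=\alpha_n\in(0,1)$.

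To certify that $\alpha_n$ is genuinely a Nash equilibrium, and not merely a fixed point of the first-order condition, I would check that with all opponents using $\alpha_n$ the infimum in Proposition \ref{OLA} really is attained at $\alpha_n$. Using $\tilde h(\alpha_n)=h(\alpha_n)=\frakp{}(\alpha_n)^{n-1}$ and that $h$ is constant on $(0,\alpha_n]$, a short computation gives $\tilde h(x)-h(x)=(\alpha_n-x)\,\frakp{}(\alpha_n)^{n-1}>0$ for $0<x<\alpha_n$, while $h$ is strictly increasing and $\tilde h$ strictly decreasing on $[\alpha_n,1]$. Thus $x=\alpha_n$ is precisely the crossing point, confirming that $\alpha_n$ is each player's best response and hence a symmetric equilibrium. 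Finally, the equilibrium winning probability follows from the optimal-payoff formula \eqref{eq:16}: $E=(\tilde h(\alpha_n)-h(0))e^{\alpha_n}+h(0)=e^{\alpha_n}\tilde h(\alpha_n)=\tfrac{1-\frakp{}(\alpha_n)^n}{n}$, which is \eqref{eq:67} once one writes $\frakp{}(\alpha_n)=1-e^{\alpha_n}+e^{\alpha_n}\alpha_n$; the same value also drops out by symmetry, since the probability that someone wins is $1-\frakp{}(\alpha_n)^n$ and, the scores being i.i.d. with an a.s. unique maximum whenever not all vanish, this splits equally among the $n$ players. I expect the main obstacle to be the self-consistency bookkeeping of the two middle steps: pinning down $h$ correctly (the jump $h(0)=0$ and the flat part on $(0,\alpha_n]$) and verifying that the OLA infimum lands exactly on $\alpha_n$ rather than somewhere in $(0,\alpha_n)$, which is what the monotonicity check secures.
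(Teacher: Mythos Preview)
Your proof is correct and follows essentially the same approach as the paper: fix the opponents at a common threshold, compute the stopping payoff $h(x)$ via Lemma~\ref{lem:xi-kappa}, and invoke Proposition~\ref{OLA} to identify the best-response threshold with the root of Equation~\eqref{eq:64}. If anything you are more careful than the paper on two points: you correctly single out $h(0)=0$ (the paper glosses over the jump at $0$ and even calls $h$ ``continuous''), and you explicitly verify that the OLA infimum lands at $\alpha_n$ rather than somewhere in $(0,\alpha_n)$ via the computation $\tilde h(x)-h(x)=(\alpha_n-x)\frakp{}(\alpha_n)^{n-1}$, which the paper omits.
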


  \begin{proof}
  Assuming that all rivals use thresholds $\alpha_n$, we will call $\xi_{\alpha_n}^{(i)}$ the random variable that represents the score obtained by the $i$-th player using the threshold $\alpha_n $ and $h(x)$ the probability of winning for a player when stopping with score $x$. We will assume, without loss of generality, that the reference player is the first and that the rest use a greed threshold $\alpha_n$. It is about proving that, under these conditions, the optimal threshold for the first player is precisely $\alpha_n$.

  $\bullet$ If $x \geq\alpha_n$, then, by Lemma \ref{lem:xi-kappa}:
\begin{equation*}
h(x)= P(x>\xi_{\alpha_n}^{(i)}: i=2,\dots,n)=\prod_{i=2}^{n} P(x>\xi_{\alpha_n}^{(i)})=( 1 + e^{\alpha_n}\,\left(  -1 + x\right)  )^{n-1}.
\end{equation*}

$\bullet$ If $x<\alpha_n$, then
\begin{equation*}
h(x)= P(x>\xi_{\alpha_n}^{(i)}: i=2,\dots,n)=\prod_{i=2}^{n} P(\xi_{\alpha_n}^{(i)}=0)=\frakp{}(\alpha_n)^{n-1}.
\end{equation*}
In any case, {$h(x)$ is continuous and non-decreasing in $[0,1]$ so that by Proposition \ref{OLA} it is enough to verify that \begin{equation*}h(\alpha_n)=\int_{\alpha_n}^ 1 h (t) dt.\end{equation*}}

We have, by definition of $\alpha_n$, that
\begin{equation*}
  ( 1 + e^{\alpha_n}\,\left(  -1 +\alpha_n \right)  )^{n-1}=\frac{
    1-\left(e^{\alpha_n} (\alpha_n-1)+1\right)^n }{n e^{\alpha_n}},
\end{equation*}
And finally, one just needs to keep in mind that
\begin{equation*}
  ( 1 + e^{\alpha_n}\,\left(  -1 +\alpha_n \right)  )^{n-1}=h(\alpha_n),
\end{equation*}
\begin{equation*}
  \frac{  1-\left(e^{\alpha_n} (\alpha_n-1)+1\right)^n }{n e^{\alpha_n}}=\int_{\alpha_n}^ 1 h(t) dt.
\end{equation*}
In this equilibrium, since all players have the same probability of winning, the winning probability of each player is:
\begin{equation*}
P_{n}=\frac{1-P(\xi_{\alpha_{n}}=0)^{n}}{n}=\frac{1-\left(  1-e^{\alpha_{n}}+e^{\alpha_{n}}\,{\alpha_{n}}\right)^{n}}{n}.
\end{equation*}

\end{proof}

 Table \ref{tab:game-2-1} contains the Nash thresholds and the
winning probabilities for each player in this version of the $n$-player game
(no-information and external payer). \pedro{}A simple example follows for $n=2$.\pedro{}
\begin{table}[h]
  \centering
\caption{Approximate Nash threshold and winning probability for Game II.1}%
\label{tab:game-2-1}%
\begin{tabular}
[c]{cccccccccc}%
$n$ & $2$ & $3$ & $4$ & $5$ & $6$ & $7$ & $8$ & $9$ & $10$\\\hline
{${\alpha_{n}}$} & {$0.5887$} & {$0.6989$} & {$0.7562$} & {$0.7927$} &
{$0.8184$} & {$0.8377$} & {$0.8528$} & {$0.8650$} & {$0.8751$}\\\hline
{$P_{n}$} & {$0.4665$} & {$0.3129$} & {$0.2366$} & {$0.1907$} & {$0.1598$} &
{$0.1375$} & {$0.1208$} & {$0.1077$} & {$0.0972$}\\\hline
\end{tabular}
\end{table}

\begin{exa}
  For $n=2$, the Nash threshold is $0.5887+$, and $P_{2}=0.4665+$.
Notice, however, that the use of this threshold does not guarantee a winning
probability of $0.4665$: the other player abandoning the equilibrium strategy
can be harmful for \emph{both of them}, as we will now show.

When the players use respective thresholds $x$ and $y$, then the
winning probability of the first player, $P_1(x,y)=P(\xi_x>\xi_y),$ is:
$$
P_1(x,y)=P(\xi_y=0)P(\xi_x>0) + P(\xi_y>0) P(\xi_x>0) P(\xi_x>\xi_y |\xi_x>0
\wedge \xi_y>0 ),
$$
that is, 
$$
P_1(x,y)=\frakp{}(y)(1-\frakp{}(x))+(1-\frakp{}(y))(1-\frakp{}(x)) P(\xi_x>\xi_y
|\xi_x>0 \wedge \xi_y>0 )
$$
which, recalling \eqref{eq:30}, gives:

If $x\leq y$, then
$$
P(\xi_x>\xi_y |\xi_x>0 \wedge \xi_y>0 )
=P(\textbf{U}[x,1]>\textbf{U}[y,1])=\frac{1-y}{2 (1-x)}
$$
so that:
$$
P_1(x,y)=\frac{1}{2}e^{x}\left(  e^{y}(y-1)(-2x+y+1)-2x+2\right)
$$

Otherwise, if $x> y$, then
$$
P(\xi_x>\xi_y |\xi_x>0 \wedge \xi_y>0 )
=P(\textbf{U}[x,1]>\textbf{U}[y,1])=\frac{x-y}{1-y}+\frac{1-x}{2 (1-y)}
$$
so that:
$$
P_1(x,y)=-\frac{1}{2}e^{x}(x-1)\left(  (x-1)e^{y}+2\right)
$$
In short:
\begin{equation}
P_{1}(x,y)=%
\begin{cases}
\displaystyle\frac{1}{2}e^{x}\left(  e^{y}(y-1)(-2x+y+1)-2x+2\right)  , &
\mathrm{if}\;x\leq y\\[0.5em]%
-\displaystyle\frac{1}{2}e^{x}(x-1)\left(  (x-1)e^{y}+2\right)  , &
\mathrm{otherwise}.
\end{cases}
\end{equation}
If the first player chooses threshold $\alpha_{2}=0.5887+$, then it is easy to verify that, for $y\in(\alpha_{2},0.745+)$,
%\begin{equation}
%P_{1}(\alpha_{2},y)=%
%\begin{cases}
%\displaystyle\frac{1}{2}e^{\alpha_{2}}\left( 
%e^{y}(y-1)(-2\alpha_{2}+y+1)-2\alpha_{2}+2\right)  , &
%\mathrm{if}\;\alpha_{2}\leq y\\[0.5em]%
%-\displaystyle\frac{1}{2}e^{\alpha_{2}}(\alpha_{2}-1)\left( 
%(\alpha_{2}-1)e^{y}+2\right)  , &
%\mathrm{otherwise}.
%\end{cases}
%\end{equation}
\begin{equation}
P_{1}(\alpha_{2},y)<P_{1}(\alpha_{2},\alpha_{2}).
\end{equation}
Which means that the first player's expected payoff can decrease
depending on the greed threshold chosen by the second one (see Figure 1).
\begin{figure}[h]
  \centering
  \includegraphics[width=6in]{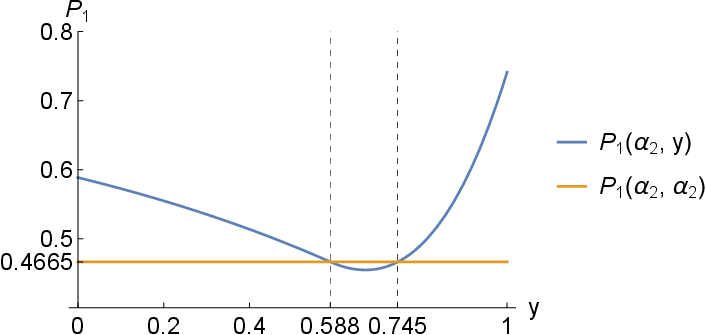}
  \caption{Winning probability of Player 1 (blue) depending on the
    threshold of Player $2$ ($y$). In orange: winning probability of each player in
the Nash equilibrium.}
\end{figure}
\end{exa}

\begin{rem}
 This equilibrium is not a strong equilibrium: the trivial
all-player coalition using the same greed threshold $0$ provides an expected
 payoff of $1/n$, which is greater than the one of Proposition \ref{pro:equilibrium-game-2-1}. However, this is an unstable
coalition as each player would be incentivized to abandon it unilaterally in
order to obtain a payoff greater than $1/n$. We believe (but have no proof)
that the Nash equilibrium of Proposition \ref{pro:equilibrium-game-2-1} is
coalition-proof, and only binding collaboration agreements can ensure a
greater payoff. This Nash equilibrium is not ``foolproof'' either, in the
sense that if some players abandon that strategy (knowingly or by mistake),
they may decrease not only their expected payoff but also that of other
players, as Example 3 above shows.

% Although it seems reasonable, it cannot be definitively stated that the only Nash
% equilibrium is $(\alpha_n,...,\alpha_n)$, calculated in this section. From the total
% symmetry of the players, it does not follow that the equilibrium strategies must be
% identical for everyone. There could be other equilibrium strategies, not all of them
% identical, and they could even be mixed strategies.  In this sense, we disagree with the
% phrase in \cite{Mazalov}: \texttt{"Symmetry of the problem gives that the equilibrium
% consists of the identical strategies"}.

\end{rem}

In Table \ref{tab:Mazalov} we copy  the row corresponding to $u^{(inf,n,N)}$ from \cite[Table
1]{Seregrina} for the no-information
game with $n$ players and unlimited spins, but without ties (in that version, if
all the scores are above $1$, then the least one is the winner), that is a Drawless Game II.1. If we compare
it with Table \ref{tab:game-2-1}, we can observe how the thresholds in
this variant are noticeable greater than those of the standard Game II.1 for $2$ and
$3$ players, while they decrease towards the same values as $n$ increases.
\begin{table}[h]
  \centering
\caption{Approximate optimal Nash threshold in the Drawless version of Game II.1}%
\label{tab:Mazalov}%
\centering
\begin{tabular}
[c]{cccccccccc}%
$n$ & $2$ & $3$ & $4$ & $5$ & $6$ & $7$ & $8$ & $9$ & $10$\\\hline
{${u}^{\ast}$} & {$0.633$} & {$0.718$} & {$0.767$} & {$0.800$} & {$0.823$} &
{$0.841$} & {$0.856$} & {$0.867$} & {$0.877$}\\\hline
\end{tabular}
\end{table}

\subsection{Game II.2: Zero-sum game}

  We still consider a no-information variant with $0$ payoff if all
the players get score $0$. However, when there is a winner, he is paid
$1/(n-1)$ by each player (so that his total payoff is $1$). In this variant we
will assume that $n>1$.

\begin{lem}
 \label{lem:gamma} For $n\in (1,\infty)$, the equation
 \begin{equation}
   \label{eq:79}
\left(  1+e^{x}(x-1)\right)  ^{n-1}=\dfrac{1}{1+e^{x}(n-1)}, %\label{eq:theta}%
\end{equation}
has a single solution $\gamma_n\in (0,1)$.
\end{lem}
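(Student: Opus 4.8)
The plan is to follow verbatim the template already used for Lemmas \ref{lem:theta} and \ref{lem:alpha}. Writing $f_n(x) = \frakp{}(x)^{n-1} = (1 + e^x(x-1))^{n-1}$ for the left-hand side and $g_n(x) = 1/(1 + e^x(n-1))$ for the right-hand side, the claim reduces to showing that the continuous function $f_n - g_n$ has exactly one zero in $(0,1)$. I would establish this through the usual three ingredients: the boundary values, the monotonicity of each side, and an appeal to the intermediate value theorem together with strict monotonicity for uniqueness.

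For the boundary values, I would use $\frakp{}(0) = 1 + e^0(0-1) = 0$ and $\frakp{}(1) = 1 + e(1-1) = 1$. Since $n > 1$ gives $n - 1 > 0$, this yields $f_n(0) = 0$ and $f_n(1) = 1$. On the other side, $g_n(0) = 1/(1 + (n-1)) = 1/n$ and $g_n(1) = 1/(1 + e(n-1))$, which lies in $(0,1)$ because $e(n-1) > 0$. Hence $(f_n - g_n)(0) = -1/n < 0$ and $(f_n - g_n)(1) = 1 - 1/(1 + e(n-1)) > 0$, so any zero is forced into the open interval $(0,1)$.

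For monotonicity, I would compute $\frakp{}'(x) = e^x(x-1) + e^x = x e^x$, which is strictly positive on $(0,1)$; thus $\frakp{}$ increases from $0$ to $1$ on $[0,1]$. Because the map $t \mapsto t^{n-1}$ is increasing on $[0,1]$ whenever $n - 1 > 0$, the composition $f_n = \frakp{}^{\,n-1}$ is strictly increasing. The denominator $1 + e^x(n-1)$ of $g_n$ is strictly increasing (as $n-1 > 0$), so $g_n$ is strictly decreasing. Therefore $f_n - g_n$ is strictly increasing and continuous, negative at $x = 0$ and positive at $x = 1$: the intermediate value theorem produces a zero, and strict monotonicity makes it unique.

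The argument is essentially routine and parallels the earlier lemmas; the only point demanding a little care --- and the closest thing to an obstacle --- is that here $n$ ranges over the real interval $(1, \infty)$ rather than the integers, so one cannot invoke integer-exponent manipulations. This is handled simply by noting that $n - 1 > 0$ already guarantees both $0^{n-1} = 0$ and the monotonicity of $t \mapsto t^{n-1}$ on $[0,1]$, which are all that the argument uses.
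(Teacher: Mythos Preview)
Your proof is correct and follows essentially the same approach as the paper: define $f_n(x)=(1+e^x(x-1))^{n-1}$ and $g_n(x)=1/(1+e^x(n-1))$, check the boundary values $f_n(0)=0<1/n=g_n(0)$ and $f_n(1)=1>g_n(1)$, and use that $f_n$ is increasing while $g_n$ is decreasing. You simply supply more of the routine details (the derivative computation, the care about real exponents $n-1>0$) than the paper does.
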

  \begin{proof}
Put $f_n(x)=\left(  1+e^x(x-1)\right)^{n-1}$ and $g_n(x)=\dfrac{1}{1+e^x(n-1)}$. Then, we have that $f_n(0)=0$, $f_n(1)=1$, $g_n(0)=1/n>0$ and $g_n(1)=\frac{1}{1+e(n-1)}<1$. Since $f_n(x)$ is increasing and $g_n(x)$ is decreasing in $[0,1]$, the result follows.
\end{proof}

\begin{lem} 
	\label{lem:gamma-increasing} The sequence $\{\gamma_{n}\}_{n>1}$, for $n\in \mathbb{N}$,
	is strictly increasing.
\end{lem}
\begin{proof}
	Write
	$$F(x,n) = f_n(x)-g_n(x)= \left(e^x (x-1)+1\right)^{n-1}-\frac{1}{(n-1) e^x+1}.
	$$
	Now, we have
	\begin{equation*}
		\frac{\partial F}{\partial x} =(n-1) e^x \left(x \left(e^x (x-1)+1\right)^{n-2}+\frac{1}{\left((n-1)
			e^x+1\right)^2}\right)
	\end{equation*}
	which is positive for all real numbers $x\in(0,1)$ and $n\in (1,\infty)$. Thus, the curve $F(x,n)=0$ can be parametrized as $(x(n),n)$ for $x\in(0,1)$ and $n\in (1,\infty)$. Note that, in particular, $\gamma_n=x(n)$ for every integer $n>1$. 

On the other hand, we also have
		$$
	\frac{\partial F}{\partial n} =\frac{e^x}{\left((n-1) e^x+1\right)^2}+\left(e^x (x-1)+1\right)^{n-1} \log
	\left(e^x (x-1)+1\right).
	$$
and, since $F(x,n)=0$ if and only if  $\displaystyle{\left(e^x (x-1)+1\right)^{n-1}=\frac{1}{(n-1) e^x+1}}$, it follows that
		\begin{equation*}
		\frac{\partial F}{\partial n}\Big|_{F(x,n)=0} =\frac{e^x-\frac{\left((n-1) e^x+1\right) \log
				\left((n-1) e^x+1\right)}{n-1}}{\left((n-1)e^x+1\right)^2},
	\end{equation*}
	which is negative for all real numbers $x\in(0,1)$ and $n\in (1,\infty)$. Thus, the implicit function theorem yields that
	\begin{equation*}
		x^{\prime}(n) = \frac{-F_n(x(n),n)}{F_x(x(n),n)} > 0
	\end{equation*}
	which implies that the sequence $(\gamma_n)_{n>1}$ for $n\in \mathbb{N}$ is strictly increasing, as claimed.
\end{proof}
\begin{prop}
  \label{pro:equilibrium-game-2-2} Game II.2 admits a Nash
equilibrium with identical strategies $\gamma_{n}$ for all players. Moreover,
the tying probability is
\begin{equation*}
\widehat{P}_{n}=\left(  1-e^{\gamma_{n}}+e^{\gamma_{n}}{\gamma_{n}}\right)  ^{n},
\end{equation*}
and the winning probability of each player is
\begin{equation*}
P_{n}=\frac{1-{\left(  1-e^{\gamma_{n}}+e^{\gamma_{n}}{\gamma_{n}}\right)
}^{n}}{n}.
\end{equation*}
The expected payoff of each player is obviously $0$.
\end{prop}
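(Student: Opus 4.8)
The plan is to fix every opponent at the threshold $\gamma_n$, use Proposition \ref{OLA} to show that the reference player's best response is again $\gamma_n$, and then read off the two probability formulas and the vanishing payoff from symmetry. First I would compute the expected payoff $h(x)$ of the reference player (say player $1$) when he stops with a positive score $x$, the other $n-1$ players using threshold $\gamma_n$. Writing $q(x)=P(\xi_{\gamma_n}<x)$ for a single opponent's score, Lemma \ref{lem:xi-kappa} gives $q(x)=\frakp{}(\gamma_n)$ for $0<x\le\gamma_n$ and $q(x)=1+e^{\gamma_n}(x-1)$ for $\gamma_n<x\le 1$. Since ties at a common positive score have probability $0$, player $1$ is the sole winner (payoff $+1$) exactly when all opponents score below $x$, and is otherwise a loser (payoff $-1/(n-1)$); hence
\begin{equation*}
h(x)=q(x)^{n-1}-\frac{1-q(x)^{n-1}}{n-1}=\frac{n}{n-1}\,q(x)^{n-1}-\frac{1}{n-1},\qquad x>0.
\end{equation*}
Separately, the payoff on busting (final score $0$) is the constant $b=-\frac{1-\frakp{}(\gamma_n)^{n-1}}{n-1}$, because a busted player wins nothing and loses $1/(n-1)$ unless every opponent also busts (global tie, payoff $0$). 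A short check shows $b\le \lim_{x\to 0^+}h(x)$, the gap being exactly $\frakp{}(\gamma_n)^{n-1}\ge 0$; thus the function equal to $b$ at $0$ and to $h(x)$ on $(0,1]$ is non-decreasing, which is precisely the hypothesis needed to invoke Proposition \ref{OLA} with $h(0)=b$.

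Next I would identify the optimal threshold. The reference player's decision problem, with opponents fixed, is exactly a single-player Showcase Showdown with stopping payoff $h$ and bust payoff $h(0)=b$, so Proposition \ref{OLA} yields a threshold-type optimum with cutoff $\kappa=\inf\{x:h(x)\ge \tilde{h}(x)\}$, where $\tilde{h}(x)=b\,x+\int_x^1 h(t)\,dt$. Since $b\le h$ makes $\tilde{h}$ non-increasing while $h$ is non-decreasing, the difference $h-\tilde{h}$ is non-decreasing, so it suffices to prove $h(\gamma_n)=\tilde{h}(\gamma_n)$ to conclude $\kappa=\gamma_n$. Evaluating the integral via the substitution $u=1+e^{\gamma_n}(t-1)$, which sends $[\gamma_n,1]$ to $[\frakp{}(\gamma_n),1]$, and using the identity $e^{\gamma_n}(\gamma_n-1)=\frakp{}(\gamma_n)-1$, the equation $h(\gamma_n)=\tilde{h}(\gamma_n)$ collapses, after clearing denominators, to
\begin{equation*}
\frakp{}(\gamma_n)^{n-1}\bigl(1+(n-1)e^{\gamma_n}\bigr)=1,
\end{equation*}
which is exactly the defining equation \eqref{eq:79} of $\gamma_n$. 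Hence $\kappa=\gamma_n$, the common threshold $\gamma_n$ is a best response to itself, and the profile is a Nash equilibrium.

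Finally I would read off the remaining quantities. A global tie requires all $n$ independent players to bust, so $\widehat{P}_{n}=\frakp{}(\gamma_n)^{n}=(1-e^{\gamma_n}+e^{\gamma_n}\gamma_n)^{n}$. With probability $1$ the outcome is either a unique winner or a global tie, so by symmetry $nP_n+\widehat{P}_n=1$, giving $P_n=\frac{1-\widehat{P}_n}{n}$ as stated. The expected payoff is then immediate: the game is zero-sum and the equilibrium is symmetric, so the $n$ equal expected payoffs must all be $0$ (equivalently, one checks directly that $P_n-\frac{1-P_n-\widehat{P}_n}{n-1}=0$ using $P_n=\frac{1-\widehat{P}_n}{n}$).

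I expect the main obstacle to be the bookkeeping around the score $0$: the bust payoff $b$ genuinely differs from $\lim_{x\to 0^+}h(x)$, so one must verify the monotonicity of the extended payoff to legitimately apply Proposition \ref{OLA}, and then carry out the algebraic reduction of $h(\gamma_n)=\tilde{h}(\gamma_n)$ to \eqref{eq:79} without sign errors.
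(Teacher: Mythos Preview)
Your proposal is correct and follows essentially the same route as the paper: fix the opponents at $\gamma_n$, compute the expected payoff $h(x)$ upon stopping (with the separate bust value $h(0)=b$), invoke Proposition~\ref{OLA}, and reduce the threshold equation $h(\gamma_n)=\tilde{h}(\gamma_n)$ to the defining equation~\eqref{eq:79} of $\gamma_n$; the probability formulas and the zero expected payoff then follow from symmetry exactly as you describe. If anything, you are slightly more explicit than the paper in checking the monotonicity across the discontinuity at $0$ and in carrying out the integral via the substitution $u=1+e^{\gamma_n}(t-1)$, but these are refinements of presentation rather than a different argument.
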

\begin{proof}
  Assuming that all rivals use thresholds $\gamma_n$, we will call $\xi_{\gamma_n}^{(i)}$ the random variable that represents the score obtained by the $i$-th player using the threshold $\gamma_n $ and $h(x)$ the probability of winning for a player when stopping with score $x$. We will assume without loss of generality that the reference player is the first and that the rest use a greed threshold $\gamma_n$. It is about proving that, under these conditions, the optimal threshold for the first player is precisely $\gamma_n$.

$\bullet$ If $x \geq\gamma_{n}$ then the first player wins with probability $\prod_{i=2}^{n} P(\xi_{\gamma_n}^{(i)}<x)$ and the probability of a tie is zero. Therefore
\begin{equation}\label{eq:82}
  h(x)=\prod_{i=2}^{n} P(\xi_{\gamma_n}^{(i)}<x)-\frac{1-\prod_{i=2}^{n} P(\xi_{\gamma_n}^{(i)}<x)}{n-1}=
  \frac{n}{n-1} ( 1 + e^{\gamma_n}\,\left(  -1 + x\right)  )^{n-1}-\frac{1}{n-1}.
\end{equation}

$\bullet$ If $x<\gamma_n$ then the first player wins when the remaining players finish with a score of 0 (all exceed 1). {Therefore, he wins with probability $\prod_{i=2}^{n} P(\xi_{\gamma_n}^{(i)}=0)$ and the probability of a tie is zero, so}
\begin{multline*}
  h(x)=\prod_{i=2}^{n} P(\xi_{\gamma_n}^{(i)}=0)- \frac{1-\prod_{i=2}^{n} P(\xi_{\gamma_n}^{(i)}=0)}{n-1}=\\
  % \frac{(n-1)\prod_{i=2}^{n} P(\xi_{\gamma_n}^{(i)}=0)}{n-1}-
  %\frac{1-\prod_{i=2}^{n} P(\xi_{\gamma_n}^{(i)}=0)}{n-1}\\
  \frac{n}{n-1}\prod_{i=2}^{n} P(\xi_{\gamma_n}^{(i)}=0)-\frac{1}{n-1}= \frac{n}{n-1}\frakp{}(\gamma_n)^{n-1}-\frac{1}{n-1},
\end{multline*}
and
\begin{equation*}
  h(0)=-\frac{1-\frakp{}(\gamma_n)^{n-1}}{n-1}.
\end{equation*}

Furthermore, $h(x)$ is non-decreasing in $[0,1]$ so that, by Proposition \ref{OLA}, it is enough to verify that
\begin{equation} \label{eq:winning}
h(\gamma_n)=-\frac{\gamma_n}{n-1}(1-\frakp{}(\gamma_n)^{n-1}) +\int_{\gamma_n}^ 1 h(t) dt.
\end{equation}
From  \eqref{eq:82}, we have
\begin{equation*}
  h(\gamma_n)=\frac{n}{n-1} ( 1 + e^{\gamma_n}\,\left(  -1 + \gamma_n\right)  )^{n-1}-\frac{1}{n-1},
\end{equation*}
and
\begin{equation*}
  \int_{\gamma_n}^ 1 h(t) dt=\frac{-e^{-\gamma_n} \left(\left(e^{\gamma_n} (\gamma_n
        -1)+1\right)^n-1\right)+\gamma_n -1}{n-1}.
\end{equation*}
By the properties of $\gamma_n$ (Lemma \ref{lem:gamma}), we have\begin{equation*}
  \left(  1+e^{\gamma_n}(\gamma_n-1)\right)  ^{n-1}=\dfrac{1}{1+e^{\gamma_n}(n-1)},
\end{equation*}
and equality \eqref{eq:winning} holds.

Finally, the tying probability $\widehat{P}_n$ is obviously the probability of all the players getting a score of $0$ when using $\gamma_n$ as their greed threshold; i.e., $\widehat{P}_n=\frakp{}(\gamma_n)^n$. Consequently, the winning probability of each player is given by
\begin{equation*}
P_{n}=\frac{1-{\frakp{}(\gamma_{n})}^{n}}{n},
\end{equation*}
and Lemma \ref{lem:xi-kappa} gives then
\begin{equation*}
  P_n =\frac{1-{\left(  1-e^{\gamma_{n}}+e^{\gamma_{n}}{\gamma_{n}}\right)
  	}^{n}}{n},
\end{equation*}
as desired.
\end{proof}

Table \ref{tab:threshold-game-2-2} shows $\gamma_{n}$ and the
tying and winning probabilities in Game II.2 for $2\leq n\leq10$.
\begin{table}[h]
  \centering
\caption{Approximate optimal threshold, tie $\widehat{P}_{n}$ and winning $P_{n}$ probabilities
for Game II.2}%
\label{tab:threshold-game-2-2}%
  \centering
\begin{tabular}
[c]{cccccccccc}%
$n$ & $2$ & $3$ & $4$ & $5$ & $6$ & $7$ & $8$ & $9$ & $10$\\\hline
{${\gamma_{n}}$} & {$0.6591$} & {$0.7305$} & {$0.7744$} & {$0.8046$} &
{$0.8268$} & {$0.8440$} & {$0.8576$} & {$0.8689$} & {$0.8783$}\\\hline
{$\widehat{P}_{n} $} & {$0.1163$} & {$0.0855$} & {$0.0680$} & {$0.0566$} & {$0.0486$}
& {$0.0426$} & {$0.0379$} & {$0.0342$} & {$0.0312$}\\\hline
{$P_{n}$} & {$0.4419$} & {$0.3048$} & {$0.2330$} & {$0.1887$} & {$0.1586$} &
{$0.1368$} & {$0.1203$} & {$0.1073$} & {$0.0969$}\\\hline
\end{tabular}
  \end{table}

\begin{rem}
  The Nash equilibrium found in Proposition
\ref{pro:equilibrium-game-2-2} is the only possible one with identical
strategies for all players. It is easy to see that for $n=2$ it is the only
possible equilibrium (symmetric or not), and that the strategies are maximin
(because the game is zero-sum). For $n>2$ we are not sure but it seems
reasonable to think that there are no other (non-symmetric) Nash equilibria,
and neither do there exist any collusive coalitions yielding negative payoffs
to the players out of it. Thus, we conjecture that it is both a strong and a
coalition-proof Nash equilibrium. In Figure 2 we plot the expected payoff of
the first player $\mathbb{P}_{1}(u_{1},u_{2},u_{3})$ in the $3$-player game when they
use respective greed thresholds $u_{1},u_{2},u_{3}$: if the first one sets
$u_{1}=\gamma_{3}$, no combined strategy of the other two yields him a
negative payoff.
\begin{figure}[h]
\centering  
 \label{F2} \includegraphics[width=6in]{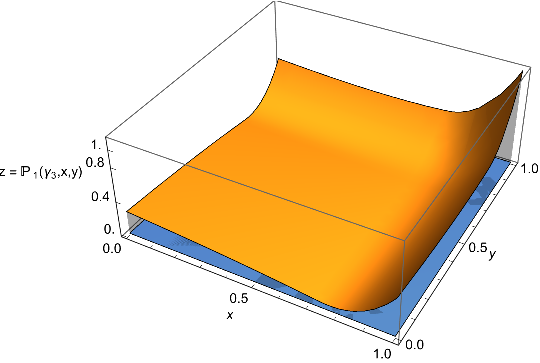}
 \caption{Payoff $z=\mathbb{P}_{1}(\gamma_{3},x,y)$. In blue, $z=0$: notice how $\mathbb{P}_1(\gamma_3,x,y)\geq 0$ everywhere.}%
\end{figure}
\end{rem}

\begin{rem}
  Unlike in Game II.1, the equilibrium strategy is not to maximize
the winning probability but to have a score at least the same as the rest of
the players. As a matter of fact, if all players used the Nash thresholds of
Game II.1, they would have a greater winning probability than with the
$\gamma_{n}$ of Proposition \ref{pro:equilibrium-game-2-2}. However, that
would not provide a Nash equilibrium in Game II.2 because all players would be
better off abandoning that strategy in order to have a greater probability of
success than the others, despite their own probability of winning being less.

\end{rem}

\begin{rem}
  If instead of having the losing players pay the winner, we
consider an external payer, and that if a tie happens, the game is to be
repeated until it is broken, then the Nash equilibrium would be the same and
the expected payoff would be $1/n$. This is because this variant is a
constant-sum game. It is remarkable that the equilibrium strategies are
different when there is no payoff in case of tie, and when, in this case, the
game is repeated until the tie is broken.
\end{rem}

\subsection{ Game II.3: Non-symmetric and constant-sum}

  We now turn our attention to the no-information, non-symmetric
version of Showcase Showdown with $n>1$ players in which one has advantage in
the sense that if there is a tie in scores (i.e., if all players get a score
of $0$) then he is the winner. In this case the payer being external or the
players is irrelevant, as this is a constant-sum game and the only relevant
point is the winning probability.

\pedro{}
\begin{lem} \label{sim}
For all $n>1$, the system of equations
\begin{equation}\label{eq:90}
\left\{
  \begin{aligned}
    \mathrm{a)}\;(1+e^{x}(-1+x))^{n-2}&=
                           \dfrac{e^{y}\,\left( -1+{\left( 1+e^{x}\,\left(
                           -1+y\right) \right) }^{n}\right) +n\,e^{x}}{n\,e^{x}\,
                           \left( 1+e^{y }\,\left( -1+y\right) \right) \,\left(
                           1+e^{x}\,\left( -2+n+x\right) \right) },\\
    \mathrm{b)}\;{(1+e^{x}(-1+x))}^{n-1}&=\frac{e^{-x} \left(n e^x
   \left(e^x (y-1)+1\right)^{n-1}+\left(e^x
      (y-1)+1\right)^n-1\right)}{n y}.
\end{aligned}
\right.
\end{equation}
has a single solution in
$[0,1]\times\lbrack0,1]$ which we will denote from now on
$(\epsilon_{n},\delta_{n})$ with $\delta_{n}>\epsilon_{n}$.
\end{lem}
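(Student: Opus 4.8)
The plan is to read the two equations of \eqref{eq:90} as the two indifference conditions of a Nash equilibrium with two types of players: the $n-1$ ordinary players share a common threshold $x$, while the advantaged player $A_n$ uses threshold $y$. I would then decouple the system, solving (b) for $y$ as a function of $x$ and substituting the result into (a).

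First I would fix $x\in(0,1)$ and view equation (b) as the best response of $A_n$ against $n-1$ rivals each playing threshold $x$. Stopping at score $s$, his winning payoff is $h_{\mathrm{adv}}(0)=\frakp{}(x)^{n-1}$ (he collects every all-bust tie) and $h_{\mathrm{adv}}(s)=(1+e^{x}(s-1))^{n-1}$ for $s>0$; by Lemma \ref{lem:xi-kappa} this is continuous, non-decreasing and non-constant on $[0,1]$. Hence Proposition \ref{OLA}, together with the observation following it, gives a \emph{unique} optimal threshold $y=\psi(x)$, namely the unique root of $h_{\mathrm{adv}}(y)=h_{\mathrm{adv}}(0)\,y+\int_y^1 h_{\mathrm{adv}}(t)\,dt$; evaluating the integral via the substitution $u=1+e^{x}(t-1)$ turns this root equation into precisely \eqref{eq:90}(b). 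So equation (b) by itself defines a map $x\mapsto\psi(x)$, which I would show is continuous via the implicit function theorem (the $y$-partial of the residual of (b) being nonzero because $h_{\mathrm{adv}}-\tilde h_{\mathrm{adv}}$ crosses zero strictly), and which satisfies $\psi(x)>x$: the advantaged player's busting payoff $\frakp{}(x)^{n-1}$ is strictly positive while an ordinary player's is $0$, so he is strictly greedier. This inequality is exactly the required $\delta_n>\epsilon_n$.

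Next I would substitute $y=\psi(x)$ into (a), the analogous indifference condition of an ordinary player against $n-2$ equal rivals and $A_n$: here $h_{\mathrm{norm}}(0)=0$ and $h_{\mathrm{norm}}(s)=(1+e^{x}(s-1))^{n-2}F_y(s)$ for $s>0$, with $F_y$ the distribution function of Lemma \ref{lem:xi-kappa}. Since $F_y(x)=\frakp{}(y)$ for $x\le y$, Proposition \ref{OLA} makes $x$ a best response exactly when $h_{\mathrm{norm}}(x)=\int_x^1 h_{\mathrm{norm}}(t)\,dt$, and splitting the integral at $y$ (where $F_y$ is piecewise) rearranges this into \eqref{eq:90}(a), the factor $1+e^{x}(n-2+x)$ in the denominator arising from that split. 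Setting $\Phi(x):=\frakp{}(x)^{n-2}-R(x,\psi(x))$, with $R$ the right-hand side of (a), the lemma reduces to proving that $\Phi$ has a unique zero $\epsilon_n\in(0,1)$. Existence I would obtain from the intermediate value theorem after checking the signs of $\Phi$ near the endpoints (treating the degenerate case $n=2$, where $\frakp{}(x)^{n-2}\equiv 1$, separately). Uniqueness I would get from monotonicity: the left side $\frakp{}(x)^{n-2}$ is increasing, since $\frakp{}'(x)=e^{x}x>0$ and $0\le\frakp{}(x)\le1$, so it is enough to show $R(x,\psi(x))$ is non-increasing along the best-response curve.

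This last monotonicity is the main obstacle. Because $\psi$ is only implicitly given, the sign of $\frac{d}{dx}R(x,\psi(x))$ is not transparent and must be computed by differentiating through $\psi$, using $\psi'(x)=-\partial_x G/\partial_y G$ for $G$ the residual of (b), and then bounding a rational-exponential expression in $x$ and $y=\psi(x)$ over the region $0<x<y<1$. I expect this sign analysis to be genuinely heavy; the facts I would lean on are that $\frakp{}$ is increasing with $0\le\frakp{}\le1$ and that $1+e^{x}(y-1)=F_x(y)\in[\frakp{}(x),1]$ by Lemma \ref{lem:xi-kappa}, which keep every base of an exponent inside $[0,1]$ and so control the signs of the partial derivatives. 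Once monotonicity is established, the unique root $\epsilon_n$ of $\Phi$ together with $\delta_n=\psi(\epsilon_n)$ furnishes the unique solution of the system, with $\delta_n>\epsilon_n$ as claimed.
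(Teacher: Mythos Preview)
Your game-theoretic reading of (a) and (b) as the two best-response conditions is correct and matches how the paper uses them later in Proposition~\ref{pro:gameII-3}; invoking Proposition~\ref{OLA} to extract a unique $y=\psi(x)$ from (b) is a clean move, and it gives you $\delta_n>\epsilon_n$ conceptually rather than by calculation. But the overall route is different from the paper's and, as you yourself anticipate, runs into a heavier monotonicity step than is actually needed.

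The paper never substitutes one equation into the other. It treats each of (a) and (b) as an implicit curve in $[0,1]^2$ and argues, by implicit differentiation of each equation \emph{separately}, that (a) defines $y$ as a \emph{decreasing} function of $x$ while (b) defines $y$ as an \emph{increasing} function of $x$; a decreasing curve and an increasing curve meet at most once, and comparing the two curves at the endpoints $x=0$ and $x=1$ supplies existence (and also the inequality $\delta_n>\epsilon_n$). Because the two monotonicity checks are decoupled, one never has to differentiate through $\psi$ or control the composite $R(x,\psi(x))$: the sign analysis reduces to the partials $\partial_x,\partial_y$ of each residual along its own zero set. Your plan, by contrast, needs the chain rule $\tfrac{d}{dx}R(x,\psi(x))=R_x+R_y\,\psi'$ with $\psi'=-G_x/G_y$, so the two implicit differentiations become entangled---precisely the ``genuinely heavy'' obstacle you flag. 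Both arguments ultimately leave the detailed sign computation as unperformed ``computational effort'', but the paper's organization keeps that effort structurally simpler. One small gap on your side: the greediness heuristic for $\psi(x)>x$ is plausible but, as written, is not yet a proof; you would still need to check that $h_{\mathrm{adv}}(x)<\tilde h_{\mathrm{adv}}(x)$ at $s=x$, which is a short computation.
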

\pedro{}
\begin{proof}
It can be seen, using implicit differentiation and some computational effort, that the first equation defines $y$ as a decreasing function of $x$, while the second equation defines $y$ as an increasing function of $x$. Then, it is enough to compare the values at $x=0$ and $x=1$ to conclude the proof.
\end{proof}
% \pedro{}
% \begin{defi}
%   \label{def:par}
%   The solution provided by Lemma \ref{sim} will be denoted $(\epsilon_n,\delta_n)$ from now on.
% \end{defi}
% \pedro{}

\begin{figure}[h]
  \centering
 \label{asim} \includegraphics[width=3in]{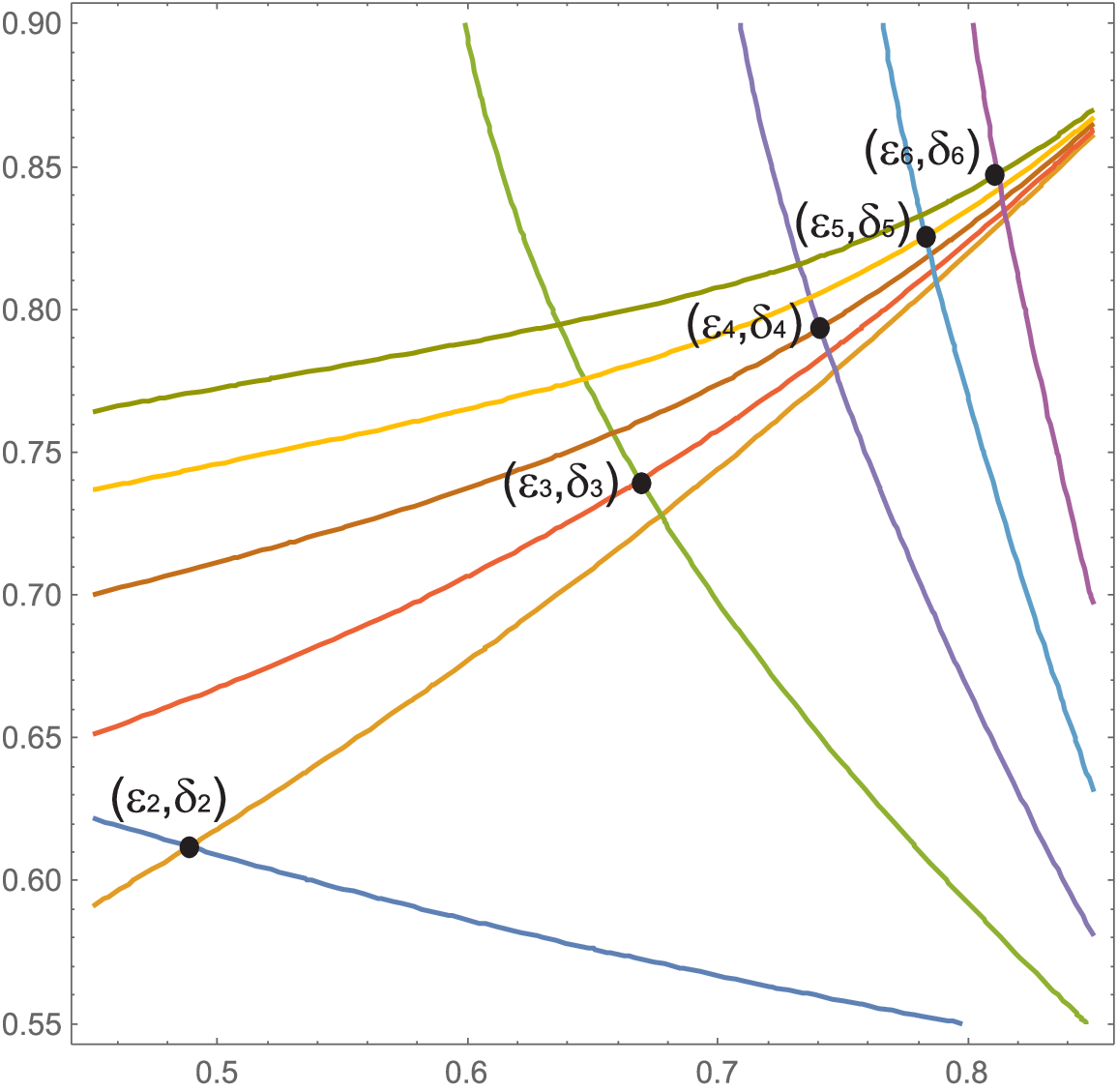}
 \caption{Solutions of Equation \eqref{eq:90} for $n=2,3,\ldots, 6$. The decreasing plots correspond to a) in \eqref{eq:90}, while the increasing ones to b).}%
\end{figure}

  Figure 3 shows the situation of Lemma \ref{sim} for $2\leq
n\leq 6$. The curves represent the points $(x,y)$ satisfying either a) or b) in Lemma \ref{sim}, for $n=2,\ldots,6$, so that their respective intersections are the points $(\epsilon_n,\delta_n)$. The figure clearly suggests that both sequences $\{\epsilon_{n}
\}_{n>1}$ and $\{\delta_{n} \}_{n>1}$ are strictly increasing. Since this fact
is not required in the sequel, and we have not been able to find a direct
proof for every $n>1$, this remains a conjecture.

\begin{con}
 The sequences $\{\epsilon_{n} \}_{n>1}$ and $\{\delta_{n} \}_{n>1}$ of solutions of the system of equations \eqref{eq:90} introduced in Lemma \ref{sim} are strictly increasing.
\end{con}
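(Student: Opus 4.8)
The plan is to mimic the single-parameter argument of Lemma \ref{lem:gamma-increasing}, now for a two-dimensional implicit system. I would allow $n$ to range over the reals with $n\geq 2$ (equations a) and b) of \eqref{eq:90} make sense for real $n$), write $G_1(x,y,n)$ and $G_2(x,y,n)$ for the differences ``left-hand side minus right-hand side'' of a) and b) respectively, and regard the solution as a pair $(\epsilon_n,\delta_n)=(x(n),y(n))$ defined implicitly by $G_1=G_2=0$. The proof of Lemma \ref{sim} already establishes that, for fixed $n$, the level curve $G_1=0$ defines $y$ as a strictly \emph{decreasing} function of $x$ while $G_2=0$ defines $y$ as a strictly \emph{increasing} function of $x$; in particular the two curves meet transversally. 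Writing the Jacobian
\[
J=\begin{pmatrix}\partial_x G_1 & \partial_y G_1\\ \partial_x G_2 & \partial_y G_2\end{pmatrix},
\]
the opposite slopes $m_1=-\partial_x G_1/\partial_y G_1<0<m_2=-\partial_x G_2/\partial_y G_2$ give $\det J=\partial_y G_1\,\partial_y G_2\,(m_2-m_1)\neq 0$ along the solution, with a definite sign. By the implicit function theorem the solution is then a $C^1$ curve in $n$, and it suffices to prove $x'(n)>0$ and $y'(n)>0$.

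Differentiating $G_1(x(n),y(n),n)=0$ and $G_2(x(n),y(n),n)=0$ in $n$ and solving by Cramer's rule gives
\begin{align*}
x'(n) &= \frac{\partial_n G_2\,\partial_y G_1-\partial_n G_1\,\partial_y G_2}{\det J},\\
y'(n) &= \frac{\partial_x G_2\,\partial_n G_1-\partial_x G_1\,\partial_n G_2}{\det J}.
\end{align*}
The transversality recorded above fixes the sign of $\det J$, and the monotonicity of the two level curves fixes the signs of the four ``spatial'' partials $\partial_x G_i,\partial_y G_i$ (these are exactly the sign data already used in Lemma \ref{sim}). Thus the entire problem collapses to determining the signs of the two ``$n$-derivatives'' $\partial_n G_1$ and $\partial_n G_2$ along the solution, and verifying that both numerators above share the sign of $\det J$.

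The main obstacle is precisely this last sign analysis. As in Lemma \ref{lem:gamma-increasing}, the factors $(1+e^x(x-1))^{\,n-1}$, $(1+e^x(y-1))^{\,n}$ and the like contribute logarithmic terms $\log(1+e^x(x-1))$ and $\log(1+e^x(y-1))$ to $\partial_n G_1$ and $\partial_n G_2$, and controlling the sign of the resulting expressions \emph{uniformly} over the admissible region $\{(x,y):0<x<y<1\}$ and all $n\geq 2$ is delicate. The device that made Lemma \ref{lem:gamma-increasing} tractable was to substitute the defining relation $F=0$ into $\partial_n F$, which collapsed it to a single manifestly signed term; I would attempt the analogous reduction here, using a) and b) of \eqref{eq:90} to eliminate the high powers of $1+e^x(y-1)$ from $\partial_n G_1$ and $\partial_n G_2$ before estimating, in the hope that the logarithmic terms recombine into an expression of constant sign.

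Whether that reduction yields closed, constant-sign expressions for general $n$ is exactly the point at which a clean argument has so far been elusive, which is why the statement is left as a conjecture rather than a lemma. A realistic fallback is to prove the two sign inequalities $x'(n)>0$, $y'(n)>0$ rigorously for each fixed small $n$ (matching the transversal crossings visible in Figure 3) and then to look for a monotone-in-$n$ comparison of the two families of curves in the spirit of Lemmas \ref{lem:theta-increasing} and \ref{lem:alpha-increasing}; I expect establishing such a comparison \emph{across all} $n$, rather than for individual values, to be the genuine difficulty.
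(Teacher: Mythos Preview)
The paper does not prove this statement: it is explicitly recorded as a \emph{conjecture}, and the authors write just before it that ``we have not been able to find a direct proof for every $n>1$''. There is therefore no paper proof to compare your proposal against.

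As a strategy, your outline is sensible and is the natural two-variable analogue of the argument the paper gives for Lemma~\ref{lem:gamma-increasing}: pass to real $n$, invoke the transversality established in Lemma~\ref{sim} to get a nonvanishing Jacobian, apply the implicit function theorem, and reduce everything to the signs of $\partial_n G_1$ and $\partial_n G_2$ along the solution curve. You are also right that substituting the defining relations back into the $n$-derivatives (the trick that collapsed $\partial_n F$ in Lemma~\ref{lem:gamma-increasing}) is the obvious first move. However, your proposal is not a proof: you yourself concede that whether this reduction produces expressions of constant sign ``is exactly the point at which a clean argument has so far been elusive''. That is the entire content of the conjecture, and you have not supplied it. The Cramer numerators mix logarithmic terms from two different bases, $\log(1+e^x(x-1))$ and $\log(1+e^x(y-1))$, with rational expressions coming from the right-hand sides of \eqref{eq:90}, and unlike the one-variable case there is no single relation that eliminates both simultaneously; one expects genuine two-variable inequalities to remain after all substitutions.

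In short: your plan matches what the authors presumably tried, and it stalls at the same place they did. To turn it into a proof you would need either a closed-form sign determination of the two Cramer numerators on the solution locus, or an alternative monotone-comparison argument between the curve families for consecutive $n$ (your ``fallback''), neither of which you have provided.
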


\begin{prop}
  \label{pro:gameII-3}
  Game II.3 admits a Nash equilibrium with greed threshold
$\delta_{n}$ for the player with advantage and identical greed thresholds
$\epsilon_{n}$ for the remaining players. Moreover, the winning probability
for the player with advantage is
\begin{align}
P_{n}^{A}={\left(  1-e^{\epsilon_{n}}+e^{\epsilon_{n}}\,\epsilon_{n}\right)
}^{n-1}\,\left(  1-e^{\delta_{n}}+e^{\delta_{n}}\,\delta_{n}\right)   &
+\frac{e^{\delta_{n}}\left(  1-{\left(  1+e^{\epsilon_{n}}\,\left(
-1+\delta_{n}\right)  \right)  }^{n}\right)  \,}{e^{\epsilon_{n}}\,n\,}.%
\end{align}

\end{prop}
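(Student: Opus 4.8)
The plan is to verify that the profile in which the advantaged player $A_n$ uses threshold $\delta_n$ while each of the other $n-1$ players uses $\epsilon_n$ is a Nash equilibrium, by showing that neither type of player gains from a unilateral deviation. For fixed opponent thresholds, each player faces a single-player stopping problem with a monotone payoff, so by Proposition \ref{OLA} the best deviation is itself of threshold type; it therefore suffices to check that the prescribed thresholds satisfy the two best-response equations, which I claim are exactly a) and b) of the system \eqref{eq:90} solved in Lemma \ref{sim}. Throughout I use the ordering $\epsilon_n<\delta_n$ guaranteed by that lemma.

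First I treat the advantaged player. Assuming the other $n-1$ players use $\epsilon_n$, let $h_A(x)$ be his winning probability when stopping at score $x$. By the cumulative distribution of Lemma \ref{lem:xi-kappa}, for $x\geq\epsilon_n$ he wins iff every opponent scores below $x$, giving $h_A(x)=(1+e^{\epsilon_n}(x-1))^{n-1}$; for $x<\epsilon_n$ (and at $x=0$, where he wins by the tie advantage) the payoff is constant and equal to $h_A(0)=\mathbf{P}(\epsilon_n)^{n-1}$, the probability that everyone busts. This $h_A$ is continuous and non-decreasing, so Proposition \ref{OLA} applies with bust-payoff $h_A(0)$, and the optimal threshold is the root of $h_A(x)=h_A(0)x+\int_x^1 h_A(t)\,dt$. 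Evaluating $\int_{\delta_n}^1(1+e^{\epsilon_n}(t-1))^{n-1}\,dt=(1-(1+e^{\epsilon_n}(\delta_n-1))^n)/(n e^{\epsilon_n})$ and simplifying turns this identity into precisely equation b) of \eqref{eq:90}, so $\delta_n$ is $A_n$'s best response.

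Next the non-advantaged player. With $A_n$ using $\delta_n$ and the remaining $n-2$ ordinary players using $\epsilon_n$, let $h_1(x)$ be the winning probability of a reference ordinary player stopping at $x$; now $h_1(0)=0$ (no tie advantage) and $h_1$ splits into three pieces on $0<x<\epsilon_n$, on $\epsilon_n\leq x<\delta_n$, and on $\delta_n\leq x\leq 1$, the last carrying an extra factor $(1+e^{\delta_n}(x-1))$ from beating $A_n$. Checking piecewise continuity (apart from the harmless upward jump at $0$) and monotonicity lets me invoke Proposition \ref{OLA} again, reducing optimality to $h_1(\epsilon_n)=\int_{\epsilon_n}^1 h_1(t)\,dt$. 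I expect this to be the main obstacle: the piece $\int_{\delta_n}^1(1+e^{\delta_n}(t-1))(1+e^{\epsilon_n}(t-1))^{n-2}\,dt$ mixes two distinct exponential rates and must be handled by writing $1+e^{\delta_n}(t-1)$ as an affine combination of $1$ and $1+e^{\epsilon_n}(t-1)$, after which the $1/(n-1)$ and $1/n$ contributions collapse into the factor $1+e^{\epsilon_n}(n+\epsilon_n-2)$ that appears in the denominator of a). Granting this, the condition becomes equation a) of \eqref{eq:90}; since $(\epsilon_n,\delta_n)$ solves both a) and b) by Lemma \ref{sim}, both best-response conditions hold at once and the profile is a Nash equilibrium.

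Finally, for the payoff I compute $P_n^A=\mathbb{E}[h_A(\xi_{\delta_n})]$, the winning probability of $A_n$ at the start of the game. Applying Corollary \ref{cor:exp} i) to $h_A$ gives $P_n^A=\mathbf{P}(\delta_n)h_A(0)+e^{\delta_n}\int_{\delta_n}^1 h_A(t)\,dt$; substituting $h_A(0)=\mathbf{P}(\epsilon_n)^{n-1}$, $\mathbf{P}(\delta_n)=1-e^{\delta_n}+e^{\delta_n}\delta_n$, and the integral evaluated in the second paragraph yields directly the two-term formula of the statement, with no further simplification required.
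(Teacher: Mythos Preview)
Your proposal is correct and follows essentially the same approach as the paper: you verify the two best-response conditions via Proposition~\ref{OLA}, showing they reduce to equations b) and a) of \eqref{eq:90} respectively (the paper treats the ordinary player first and the advantaged one second, but the content is identical), and you compute $P_n^A$ via Corollary~\ref{cor:exp}~i), which is exactly the two-event decomposition the paper writes out by hand. Your identification of the mixed-exponent integral $\int_{\delta_n}^1(1+e^{\delta_n}(t-1))(1+e^{\epsilon_n}(t-1))^{n-2}\,dt$ as the main computational obstacle is apt; the paper also handles this step by a direct (and similarly sketchy) calculation that ultimately appeals to the defining equations in Lemma~\ref{sim}.
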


  \begin{proof}
Without loss of generality we assume that the player with advantage is the
$n$-th one. We will show that the following two statements \textbf{A}) and \textbf{B}) are verified:

\textbf{A})\emph{If the $n$-th player uses the threshold $\delta_{n}$ and the rest except one of them (which we will assume without loss of generality is the first player) use $\epsilon_{n}$ as a threshold, then the optimal threshold of the first is also $\epsilon_{n}$}.

Thus, assume that player $n$ uses threshold $\delta_{n}$ and that
players $2,\ldots,n-1$ use the identical threshold $\epsilon_{n}$. Assuming that the rivals use the thresholds already mentioned, we will call $\xi_{\epsilon_{n}}^{(i)}$, $2\leq i\leq n-1,$ the random variable that represents the score obtained by player $i$ using the threshold $\epsilon_{n}$, $\xi_{\delta_{n}}^{(n)}$ the random variable that represents the score obtained by player $n$ using the threshold $\delta_{n}$ and let $h(x)$ be the probability of winning for the first player when stopping with score $x$.
It is about proving that under these conditions the optimal threshold for the first player is precisely $\epsilon_{n}$.

$\bullet$ For all $x\in\lbrack\epsilon_{n},\delta_{n}],$%
\begin{equation*}
h(x)=P(\xi_{\delta_{n}}^{(n)}=0)\prod_{i=2}^{n-1}P(\xi_{\epsilon_{n}}%
^{(i)}<x)=\,\left(  1+e^{\delta_{n}}\,\left(  -1+\delta_{n}\right)  \right)
\,\left(  1+e^{\epsilon_{n}}\,\left(  -1+x\right)  \right)  ^{n-2}.%
\end{equation*}

  $\bullet$ For $x<\epsilon_{n},$%
\begin{equation*}
h(x)=P(\xi_{\delta_{n}}^{(n)}=0)\prod_{i=2}^{n-1}P(\xi_{\epsilon_{n}}%
^{(i)}=0)=\,\left(  1+e^{\delta_{n}}\,\left(  -1+\delta_{n}\right)  \right)
\,\left(  1+e^{\epsilon_{n}}\,\left(  -1+\epsilon_{n}\right)  \right)  ^{n-2}.%
\end{equation*}

 $\bullet$ For $x>\delta_{n},$%
\begin{equation*}
h(x)=P(\xi_{\delta_{n}}^{(n)}<x)\prod_{i=2}^{n-1}P(\xi_{\epsilon_{n}}%
^{(i)}<x)=\,\left(  1+e^{\delta_{n}}\,\left(  -1+x\right)  \right)  \,\left(
1+e^{\epsilon_{n}}\,\left(  -1+x\right)  \right)  ^{n-2}.%
\end{equation*}

We have that $h(x)$ is continuous and non-decreasing in $[0,1]$ so to be in the conditions of Proposition \ref{OLA} we need to see that

\begin{equation*}
  h(\epsilon_{n})=\int_{\epsilon_{n}}^{1}h(t)dt
  %=\int_{\epsilon_{n}}^{\delta_{n}%
%}h(t)dt+\int_{\delta_{n}}^{1}h(t)dt.
\end{equation*}

To see this, we take into account that
\begin{equation*}
h(\epsilon_{n})=\left(  1+e^{\delta_{n}}\,\left(  -1+\delta_{n}\right)
\right)  \,\left(  1+e^{\epsilon_{n}}\,\left(  -1+\epsilon_{n}\right)
\right)  ^{n-2}.%
\end{equation*}
On the other hand,
\begin{multline*}
\int_{\epsilon_{n}}^{1}h(t)dt=\int_{\epsilon_{n}}^{\delta_{n}}%
h(t)dt+\int_{\delta_{n}}^{1}h(t)dt=\\
\int_{\epsilon_{n}}^{\delta_{n}}\left(  1+e^{\delta_{n}}\,\left(
-1+\delta_{n}\right)  \right)  \,\left(  1+e^{\epsilon_{n}}\,\left(
  -1+t\right)  \right)  ^{n-2}dt+\\
\int_{\delta_{n}}^{1}\left(  1+e^{\delta_{n}%
}\,\left(  -1+t\right)  \right)  \,\left(  1+e^{\epsilon_{n}}\,\left(
-1+t\right)  \right)  ^{n-2}dt=\\
\frac{e^{-\epsilon_{n}}\left(  1+e^{\delta_{n}}\,\left(  -1+\delta
_{n}\right)  \right) [ \left(  1+e^{\epsilon_{n}}\,\left(  -1+\delta
_{n}\right)  \right)  ^{n-1}-\left(  1+e^{\epsilon_{n}}\,\left(
-1+\epsilon_{n}\right)  \right)  ^{n-1}]}{n-1}+\\
\frac{e^{-2\epsilon_{n}}[-e^{\delta_{n}}+e^{\epsilon_{n}}n+(1+e^{\epsilon
_{n}}\,\left(  -1+\delta_{n}\right)  )^{n-1}\left(  e^{\delta_{n}}%
-e^{\epsilon_{n}}\left(e^{\delta_{n}}\,\left(  -1+\delta_{n}\right)
(n-1)+n\right)  \right)]}{n(n-1)}.%
\end{multline*}

Taking into account now the definition of $\epsilon_n$ and $\delta_n$ in Lemma \ref{sim}, the equality $h(\epsilon_{n})=\int_{\epsilon_{n}}^{1}h(t)dt$ holds.

\textbf{B}) \emph{If all but the $n$-th player use the threshold $\epsilon_{n}$
then the optimal threshold for the last one is $\delta_{n}$.}

Assuming that all the players (except the last one) use thresholds $\epsilon_{n}$, we will consider the random variable $\xi_{\epsilon_{n}}^{(i)}$, $1\leq i<n,$
which represents the score obtained by the $i$-th player using the threshold $\epsilon_{n}$ and $h(y)$ to be the probability of winning of the $n$-th player by stopping with score $y$.
It is about proving that under these conditions the optimal threshold for the $n$-th player is precisely $\delta_{n}$.

$\bullet$ If $y\geq\epsilon_{n}$, then we have that the $n$-th player wins with probability
\begin{equation}\label{eq:98}
h(y)=\prod_{i=1}^{n-1}P(\xi_{\epsilon_{n}}^{(i)}<y)=\left(  1+e^{\epsilon_{n}
}\,\left(  -1+y\right)  \right)  ^{n-1}.%
\end{equation}

$\bullet$ If $y<\epsilon_{n}$, then the $n$-th player wins if the remaining
players finish with score 0 (all exceed 1)
\begin{equation*}
h(y)=\prod_{i=1}^{n-1}P(\xi_{\epsilon_{n}}^{(i)}=0)=\left(  1+e^{\epsilon_{n}%
}\,\left(  -1+\epsilon_{n}\right)  \right)  ^{n-1}.%
\end{equation*}

We have that $h(y)$ is continuous and non-decreasing in $[0,1]$ so that by Proposition \ref{OLA} it is enough to verify that

\begin{equation*}
h(\delta_n)=\delta_n \frakp{}(\epsilon_{n})^{n-1} +\int_{\delta_n}^{1}h(t)dt.
\end{equation*}

To do this, using \eqref{eq:98} for $y\geq \epsilon_n$, we get:
%\begin{equation*}
%h(y)=\,\left(  1+e^{\epsilon_{n}}\,\left(  -1+y\right)  \right)  ^{n-1},%
%\end{equation*}
%and
\begin{multline*}
\int_{y}^{1}h(t)dt+ y \frakp{}(\epsilon_{n})^{n-1}  =\\
-\frac{e^{-\epsilon_{n}}\left(  -1+(1+e^{\epsilon_{n}}\,\left(
-1+y\right)  \right)  ^{n}}{n}+ y \left(  1+e^{\epsilon_{n}}\,\left(
-1+\epsilon_{n}\right)  \right)  ^{n-1}.
\end{multline*}

Performing simple calculations, taking into account the second equation in \eqref{eq:90} in Lemma \ref{sim} with $x=\epsilon_{n}$, it follows
that
\begin{equation*}
h(\delta_n)=\delta_n \frakp{}(\epsilon_{n})^{n-1} +\int_{\delta_n}^{1}h(t)dt.
\end{equation*}
Now, the winning probability of the advantageous player is the sum of the probabilities of the following complementary events:
\begin{itemize}
\item[a)] All the players have a score of $0$. This happens with probability
  \begin{equation*}
    \frakp{}(\epsilon_n)^{n-1}\frakp{}(\delta_n)={\left(  1+e^{\epsilon_{n}}\,\left(  -1+\epsilon_{n}\right)  \right)  }
  ^{n-1}{\left(  1+e^{\delta_{n}}\,\left(  -1+\delta_{n}\right)  \right)  }.
\end{equation*}
\item[b)] The $n$-th player  has a score of $t\in\left[  \delta_{n},1\right]$ and the rest of them (using threshold $\epsilon_{n}$) do not exceed this
score. This happens with probability
\begin{equation*}
  e^{\delta_{n}}\,\left(  1-\delta_{n}\right)  \int_{\delta_{n}}^{1}\frac{(P(\xi_{\epsilon_{n}}<t))^{n-1}}{1-\delta_{n}}dt=e^{\delta_{n}}\frac{1-({1+e^{\epsilon_{n}%
}\,\left(  -1+\delta_{n}\right))}^{n}}{n{e^{\epsilon_{n}}}}.
\end{equation*}
\end{itemize}
Then, $P_{n}^{A}$ is just the sum of the previous probabilities and the result follows.
\end{proof}

Table \ref{tab:nash-game-2-3} shows the Nash thresholds
$\alpha_{n}$ for the ``normal'' players and $\beta_{n}$ for the one with
advantage, and the winning probability $P_{n}^{A}$ of the latter in Game II.3.
Note that the winning probability of the remaining players is just
$P^N_n=\frac{1-P_{n}^{A}}{n-1}$.

\begin{table}[h]
  \centering
\caption{Nash thresholds $(\epsilon_n,\delta_n)$, and winning probability $P^A_n$ of the player with advantage, and
winning probability $P^N_n$ of the other players  in Game II.3}%
\begin{tabular}{cccccccccc}
$n$ & $2$ & $3$ & $4$ & $5$ & $6$ & $7$ & $8$ & $9$ & $10$ \\ \hline
{${\epsilon _{n}}$} & {$0.4887$} & {$0.6687$} & {$0.7408$} & {$0.7832$} & {$%
0.8119$} & {$0.8329$} & {$0.8491$} & {$0.8621$} & {$0.8728$} \\ \hline
{$\delta _{n}$} & {$0.6118$} & {$0.7401$} & {$0.7936$} & {$0.8256$} & {$%
0.8475$} & {$0.8637$} & {$0.8763$} & {$0.8865$} & {$0.8948$} \\ \hline
{$P_{n}^{A}$} & {$0.5366$} & {$0.3720$} & {$0.2879$} & {$0.2357$} & {$0.1998$ } &
{$0.1736$} & {$0.1535$} & {$0.1377$} & {$0.1249$} \\ \hline $P_n^N$ &0.4634 &
0.3140 & 0.2374 & 0.1911&0.1600 & 0.1378& 0.1209 & 
0.1078 & 0.0972\\\hline

\end{tabular}%
\label{tab:nash-game-2-3}
\end{table}

\begin{rem}
 The above Nash equilibrium is the only possible one with
identical thresholds for the disadvantageous players. And, as in the previous
versions, it seems reasonable that there are no more Nash equilibria. This
equilibrium seems also (as in the case of Game II.2) a strong and
coalition-proof Nash equilibrium. There is no possibility of collusion in this game, either.
%and the
%expected payoff of each player is at least $P_{n}$, whatever the others'
%strategies.
\end{rem}

\section{Conclusions and future perspectives}

Several versions of the Showcase Showdown game with unlimited
number of spins have been studied, including cases with possibility of draw.
For the sequential $n$-player game, optimal thresholds for each player, and
their winning probabilities have been computed, thus improving on the results in
\cite{Seregrina}, which apply only to the first player. We have also discovered that there is a possibility of coalitions that decrease the probability of winning for certain players, so that the Nash equilibrium does not guarantee the winning probabilities computed.

We have studied three versions of the game in which the players have no information on the score of the others. Despite the underlying game being the same, the Nash equilibria vary considerably, especially for games with few players (see, for instance, cases $n=2$ and $n=3$ in Table \ref{tab:conclusions}). The greedest strategies (those with greater threshold) seem to be those of the player with advantage in the asymmetric game (except when $n=2$, where the greedest threshold happens in the zero-sum game). The equilibrium strategies with lesser threshold seem to happen for all $n$ for the players without advantage, in the asymmetric game. See Table \ref{tab:conclusions} for a summary of these results. We have also established that the equilibrium thresholds are increasing in $n$ in the Non-zero-sum and in the Zero-sum games (we conjecture that this also happens in the Asymmetric one).
\begin{table}
  \label{tab:conclusiones}
  \centering
  \caption{Nash thresholds for the Non-zero-sum game $(\alpha_n)$, the Zero-sum game $(\gamma_{n})$, and the Asymmetric game $(\epsilon_n,\delta_n)$.}
  \begin{tabular}[c]{cccccccccc}\label{tab:conclusions}%
    $n$ & $2$ & $3$ & $4$ & $5$ & $6$ & $7$ & $8$ & $9$ & $10$\\\hline
    {${\alpha_{n}}$} & {$0.5887$} & {$0.6989$} & {$0.7562$} & {$0.7927$} &
                                                                           {$0.8184$} & {$0.8377$} & {$0.8528$} & {$0.8650$} & {$0.8751$}\\\hline
    {${\gamma_{n}}$} & {$0.6591$} & {$0.7305$} & {$0.7744$} & {$0.8046$} &
                                                                           {$0.8268$} & {$0.8440$} & {$0.8576$} & {$0.8689$} & {$0.8783$}\\\hline
    
    {${\epsilon _{n}}$} & {$0.4887$} & {$0.6687$} & {$0.7408$} & {$0.7832$} & {$%
                                                                              0.8119$} & {$0.8329$} & {$0.8491$} & {$0.8621$} & {$0.8728$} \\ \hline
    {$\delta _{n}$} & {$0.6118$} & {$0.7401$} & {$0.7936$} & {$0.8256$} & {$%
                                                                          0.8475$} & {$0.8637$} & {$0.8763$} & {$0.8865$} & {$0.8948$} \\\hline

  \end{tabular}
\end{table}

 Some open questions remain for the case $n>2$: it seems likely
that the symmetric Nash equilibria found in the no-information games are
unique, due to the inherent symmetry in the game; furthermore, in the
constant-sum cases, it seems reasonable to think that the computed strategies
are optimal in the sense that they guarantee at least the expected payoff when
all the players use their Nash thresholds. But we have not been able to tackle
these problems yet.

 We suggest the following future research:

\begin{enumerate}
\item   To consider a maximum value for the scores (accumulated sums) $M>1$. It may be very interesting to study the limits of the optimal thresholds
and winning probabilities in the sequential game of each player when $M$ tends to infinity.

\item   Let the payoffs be a function of the score. This would
intertwine the struggle to win and the aim of getting as great a payoff as
possible.

\item  To choose other underlying random variables different from
$\mathbf{U}[0,1]$. For instance, the exponential distribution looks promising.
%When the distribution is not uniform, Risky Exchange and Showcase Showdown are
%not equivalent and each will have its own optimum and equilibrium strategies.

\item   A version of the sequential game with more information
might be as follows: instead of each player making all his spins in his
(single) turn, he spins once every turn (or stops), and then the next player
gets the turn iteratively, in the same order (unless he has stopped, in which
case he does not play any more). The game will end when all the players have
stopped. Players who get a score greater than 1 are eliminated. The game stops
when all have been eliminated or stopped, and the winner is the one
with the highest score.
\end{enumerate}

%\printbibliography[heading=bibintoc]
%\bibliographystyle{Chicago}
\bibliographystyle{plain}
\bibliography{Bibliography-MM-MC}

\end{document}